\documentclass[fleqn
,12pt
]{article}
\usepackage{amsmath,amssymb,amsthm,xcolor,framed,tikz}
\usepackage[english]{babel}
\usepackage[margin=3.5cm
,top=3cm,bottom=3cm
]{geometry}
\usepackage[title]{appendix}
\usepackage[bookmarks]{hyperref}
\numberwithin{equation}{section}

\def\XXint#1#2#3{{\setbox0=\hbox{$#1{#2#3}{\int}$}
		\vcenter{\hbox{$#2#3$}}\kern-.5\wd0}}

\newcommand{\R}{{\mathbb R}}

\renewcommand{\S}{{\mathbb S}}

\newcommand{\e}{\epsilon}
\DeclareMathOperator{\dv}{div}
\DeclareMathOperator{\curl}{curl}

\newcommand{\loc}{{\rm loc}}
\newtheorem{thm}{Theorem}[section]

\newtheorem{conj}[thm]{Conjecture}
\newtheorem{lem}[thm]{Lemma}

\theoremstyle{definition}
\newtheorem{rem}[thm]{Remark}
\newtheorem*{rem*}{Remark}

\title{On the minimality of the radial singularity in nematic liquid crystals}
\date{\today}
\author{Pierre Bousquet\thanks{Univ Toulouse, CNRS, IMT, Toulouse, France, pierre.bousquet@math.univ-toulouse.fr} 
\and Xavier Lamy\thanks{Institut Universitaire de France (IUF) \& Univ Toulouse, CNRS, IMT, Toulouse, France,
xavier.lamy@math.cnrs.fr}
}

\begin{document}

\maketitle

\begin{abstract}
We show that the map $u_*\colon B_1\subset\mathbb R^3\to\mathbb S^2$, $x\mapsto x/|x|$, minimizes the anisotropic energy 
\begin{align*}
\int_{B_1} 
\Big(
k_1(\mathrm{div}\, u)^2+k_2 (u\cdot \mathrm{curl}\, u)^2 +k_3 |u\times\mathrm{curl}\, u|^2
\Big)
\, dx,
\end{align*}
among $\mathbb S^2$-valued maps agreeing with it on the boundary,
for values of $k_1,k_2,k_3>0$ beyond the known regime $k_1\leq k_2$.
Our main tool is a new stability estimate for $u_*$ as a
minimizing sphere-valued harmonic map.
\end{abstract}

\section{Introduction}

We are interested in the Oseen-Frank energy
\begin{align}\label{eq:Ek}
E_{\mathbf k}(u)=
\int_{B_1} 
\Big(
k_1(\dv u)^2+k_2 (u\cdot \curl u)^2 +k_3 |u\times\curl u|^2
\Big)
\, dx,
\end{align}
of maps $u\colon B_1\subset\R^3\to\mathbb S^2\subset\R^3$ which agree with the radial map
\begin{align}\label{eq:u*}
u_*(x)=\frac{x}{|x|}\,,
\end{align} 
on the boundary $\partial B_1$.
Here $k_1,k_2,k_3 >0$ are the elastic constants associated to splay, twist and bend deformations in a nematic liquid crystal \cite{degennes},
 and we denote their triple by $\mathbf k=(k_1,k_2,k_3)$.
The admissible set of maps is defined by
 \begin{align}
 H^1_{u_*}(B_1;\mathbb S^2)
 =\Big\lbrace u\in H^1(B_1;\R^3)\colon 
 &
 u(x)\in\mathbb S^2\text{ for a.e. }x\in B_1
 \nonumber
 \\
 &
 \text{and }u=u_*\text{ on }\partial B_1\Big\rbrace\,.
 \label{eq:H1u*}
 \end{align}
The radial map $u_*$ is a critical point of $E_{\mathbf k}$ for all values of $\mathbf k$, see e.g.~\cite[Proposition 3]{ou92}. However, 
that \(u_*\) minimizes $E_{\mathbf k}$ or not,
 is only known for some, but not all values of 
 $\mathbf k$. Our main results partially fill this gap.
 
  \begin{figure}[h]
\begin{center}
\begin{tikzpicture}[scale=1]

\draw[thick,->] (0,0) node [below] {0} -- (9,0) node [below] {$k_2/k_1$};

\draw (8,3) node {minimizer \cite{BCL86,lin87,ou92}};

\draw (2,1) node {unstable \cite{helein87}};

\draw (3,5) node {linearly stable \cite{CT90,KO92}};

\draw[thick,->] (0,0) -- (0,6) node [left] {$k_3/k_1$};

\draw[thick] (6,0) node {\tiny $+$} node [below] {1} -- (6,6);

\draw (5.4,0) node {\tiny $+$} node [below] {$\frac 89$};

\draw[thick] (6,0) -- (0,5) node {\tiny $+$} node [left] {8};

\draw[thick,dashed] (5.4,0.5) -- (5.4,6);

\draw[thick,dashed] (6,0) -- (5.7,.7) -- (5.7,6);

\draw[->] (4,7) node [above] {Theorem~\ref{t:minOF0}} -- (5.4,6.3) -- (5.4,6.1);

\draw[->] (7,7) node [above] {Theorem~\ref{t:minOF}} -- (5.7,6.3) -- (5.7,6.1);

\end{tikzpicture}
\end{center}
\caption{Known facts about the minimality of $u_*$, and new results proven in this article:
minimality on the right of the dashed lines, among all competitors (Theorem~\ref{t:minOF}) or those only singular at the origin (Theorem~\ref{t:minOF0}).}
\end{figure}

The first known result in that direction \cite{BCL86} is that
 $u_*$ is the unique minimizer of the Dirichlet energy 
 \begin{align}\label{eq:D}
 D(u)=\int_{B_1}|\nabla u|^2\, dx\,.
 \end{align}  
This is relevant to the case of equal elastic constants $\mathbf k=\mathbf 1=(1,1,1)$,
because  $(\dv u)^2+|\curl u|^2-|\nabla u|^2=\dv((\dv u)u-(u\cdot\nabla)u)$ is a null Lagrangian: the quantity \(E_{\mathbf 1}(u)-D(u)\) depends only on the fixed boundary datum \(u=u_*\) on \(\partial B_1\) and thus
\begin{equation}\label{eq171}
E_{\mathbf 1}(u)- D(u)=E_{\mathbf 1}(u_*)- D(u_*)=16\pi-8\pi=8\pi\,.
\end{equation}
 It follows that  $u_*$ is also  the unique minimizer of the Oseen-Frank energy $E_{\mathbf 1}$.

More generally, it is known that $u_*$ is the unique minimizer of $E_{\mathbf k}$ provided that
 $k_2\geq k_1$ \cite{lin87,ou92}.
In the opposite direction,  $u_*$ is not a minimizer of $E_{\mathbf k}$ 
if $k_2 + k_3/8 < k_1$ \cite{helein87}.
 For the remaining values $k_2 < k_1 \leq k_2 +k_3/8$, the radial map  $u_*$ is linearly stable (i.e. the second variation of $E_{\mathbf k}$ at $u_*$ is positive definite) \cite{CT90,KO92}, 
but its minimality  is open.
Numerical simulations in \cite{AG97} 
suggest that it might be non-minimizing for some of these linearly stable values.

Our main result establishes that $u_*$ is 
a minimizer
at least slightly beyond the previously known regime $k_2\geq k_1$.

\begin{thm}\label{t:minOF}
There exists $\delta \in (0,1/8]$ such that,
for any $k_1,k_2,k_3>0$ with $k_1\leq k_2 + \delta \min(k_2, k_3)$,
the radial hedgehog $u_*$ is the unique minimizer of the Oseen-Frank energy $E_{\mathbf k}$ in $H^1_{u_*}(B_1;\mathbb S^2)$.
\end{thm}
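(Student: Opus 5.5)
The plan is to reduce Theorem~\ref{t:minOF} to a quantitative stability estimate for $u_*$ as the minimizing harmonic map of \cite{BCL86}, and then to absorb the deficit $k_1-k_2\le \delta\min(k_2,k_3)$ into it. The starting point is the null Lagrangian identity \eqref{eq171}: for every $u\in H^1_{u_*}(B_1;\S^2)$ we have $\int (\dv u)^2+|\curl u|^2 = D(u)+8\pi$. Since $|u|=1$, we also have $|\curl u|^2=(u\cdot\curl u)^2+|u\times \curl u|^2$. Finally $\curl u_*=0$ and $E_{\mathbf k}(u_*)=k_1\int(\dv u_*)^2=16\pi k_1$. Together these give, for any $\mathbf k$,
\begin{align*}
E_{\mathbf k}(u)-E_{\mathbf k}(u_*)
= k_1\big(D(u)-D(u_*)\big) -(k_1-k_2)\int_{B_1}(u\cdot\curl u)^2 -(k_1-k_3)\int_{B_1}|u\times\curl u|^2 .
\end{align*}
When $k_1\le k_2$ and $k_1\le k_3$ this is immediately $\ge 0$, and $=0$ only if $u=u_*$, by \cite{BCL86}. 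The other cases need more care.

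The first step is to handle large $k_3$ versus small $k_3$ separately, because the bend term has the wrong sign when $k_3<k_1$. For $k_3<k_1$ I would not expand around $k_1$ but around $k_3$. Concretely, write $E_{\mathbf k}=k_3E_{\mathbf 1}+(k_1-k_3)\int(\dv u)^2+(k_2-k_3)\int(u\cdot\curl u)^2$. I would then reuse the mechanism of \cite{lin87,ou92}, which bounds the splay term from below in terms of a Dirichlet-type quantity on radial slices. The known proof for $k_1\le k_2$ uses exactly such a lower bound on $\int(\dv u)^2+(u\cdot\curl u)^2$ relative to $u_*$. I expect that argument to leave a surplus proportional to $D(u)-D(u_*)$ that has so far been thrown away. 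Since $k_1-k_2\le\delta k_3$, this surplus, weighted by $k_3$, is the quantity that will have to pay for the defect $(k_1-k_2)\int(\dv u)^2$-type error.

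The main step, and the main obstacle, is the new stability estimate. I would aim for an inequality of the form
\begin{align*}
D(u)-D(u_*)\ \ge\ c\int_{B_1}|\curl u|^2\,dx \qquad\text{for all }u\in H^1_{u_*}(B_1;\S^2),
\end{align*}
or a weaker variant controlling only $\int (u\cdot\curl u)^2$, with a universal $c>0$. Once this is available, choosing $\delta\le c$ makes the right-hand side of the identity above nonnegative. This uses $(k_1-k_2)\le \delta k_2\le c\,k_1$ in the regime $k_3\ge k_1$, and the $k_3$-weighted version of the same bound in the regime $k_3<k_1$. Uniqueness then follows from the equality case, which forces $D(u)=D(u_*)$ and hence $u=u_*$ by \cite{BCL86}.

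To prove the stability estimate I would follow the calibration/coarea proof of \cite{BCL86}. That proof writes $D(u)-D(u_*)$ as an integral over spheres $\partial B_r$ of $|\nabla_T u|^2-2|\mathrm{Jac}_T u|$ plus $\int |\partial_r u|^2$, and both pieces are nonnegative. The tangential piece measures the deviation of $u|_{\partial B_r}$ from being conformal; the radial piece controls $\partial_r u$. The components of $\curl u$ are built from $\partial_r u$ and the tangential derivatives. I would therefore bound $|\curl u|^2$ pointwise by these two defects, plus a term involving $u-x/|x|$ on each sphere. That extra term should be controlled using degree one of $u|_{\partial B_r}$ and a quantitative rigidity for degree-one maps $\S^2\to\S^2$ with nearly minimal energy (closeness to a Möbius map, with curl vanishing only for the identity-type maps compatible with the boundary). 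I expect the difficulty to be that this rigidity is only modulo Möbius transformations, so bubbling concentrated near a point must be ruled out separately. This is presumably why Figure~1 distinguishes competitors singular only at the origin from general ones, and why one may have to settle for a small, non-explicit $\delta$.
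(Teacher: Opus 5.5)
Your reduction step is correct. The identity for $E_{\mathbf k}(u)-E_{\mathbf k}(u_*)$ holds, and given a global estimate $c\int_{B_1}|\curl u|^2\le D(u)-D(u_*)$ you need no extra ``surplus'' from \cite{lin87,ou92}. Ou's inequality $\int_{B_1}\big((\dv u)^2+(u\cdot\curl u)^2\big)\,dx\ge 16\pi$ alone gives, for $k_3\le k_2<k_1$,
\[
E_{\mathbf k}(u)-E_{\mathbf k}(u_*)\ge (k_1-k_2+k_3)\big(D(u)-D(u_*)\big)-(k_1-k_2)\int_{B_1}|\curl u|^2\,dx ,
\]
which is nonnegative when $k_1-k_2\le \frac{c}{1-c}k_3$. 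This is Lemma~\ref{l:stabcurlOF}; take $\delta$ strictly below $c/(1-c)$ to get uniqueness on the edge of the cone.

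The genuine gap is the global curl estimate itself, and the route you propose for it cannot work. The decomposition you attribute to \cite{BCL86} is not available for general competitors. One only has
\[
D(u)-D(u_*)=\int_0^1\Big(\int_{\partial B_r}|\nabla_T u|^2\,d\mathcal H^2-8\pi\Big)\,dr+\int_{B_1}|\partial_r u|^2\,dx ,
\]
and the bracket is only bounded below by $8\pi\big(|\deg(u|_{\partial B_r})|-1\big)$. This is negative as soon as the degree on $\partial B_r$ vanishes. It already happens for a single singularity slightly off the origin: for the map $u^{x_0}$ of Lemma~\ref{l:ex}, the restriction $u^{x_0}|_{\partial B_r}$ has degree $0$ and is nearly constant for $r\ll|x_0|$, so the bracket is close to $-8\pi$. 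General $H^1$ competitors may have many singularities (dipoles, etc.) and arbitrary degrees on concentric spheres.

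So the obstruction is the position and number of singularities, not bubbling. No sphere-by-sphere rigidity for degree-one maps $\S^2\to\S^2$ can remove it. Concentric slicing works only for maps continuous off the origin, which is exactly Lemma~\ref{l:stab_sing_origin} and Theorem~\ref{t:minOF0}. Moreover, the global curl estimate is essentially equivalent to the theorem: Remark~\ref{r:curl} and Lemma~\ref{l:stabcurlOF} give the two implications. Its strengthening modulo translations is the open Conjecture~\ref{conj:stab}. Your plan therefore relocates the whole difficulty into a step for which you have no mechanism.

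The missing idea is that you never need the estimate for all competitors, only for a minimizer of $E_{\mathbf k}$, which exists by the direct method.
\begin{itemize}
\item For $\mathbf k$ close to $\mathbf 1$, a blow-up and compactness argument (\cite{AL88}, using the small-energy regularity and compactness of \cite{HKL86,HKL88}; Lemma~\ref{l:minOFsing}) shows that every minimizer is smooth away from a single point $x_0$.
\item For such a map, slicing by spheres centered at $x_0$ does see degree one. With the Hardy inequality this gives $\frac19\int_{B_1}|\nabla u-\nabla u_*(\cdot-x_0)|^2\,dx\le D(u)-8\pi+\frac{32\pi}{9}|x_0|^2$ (Lemma~\ref{l:stab_sing_x0}). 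No M\"obius rigidity is needed, because $u=u_*$ on $\partial B_1$ pins the map.
\item The translation cost is absorbed by the bound $D(u)-8\pi\ge\frac{8\pi}{3}|x_0|^2$ from \cite{BCL86} (Lemma~\ref{l:lower_bd_sing_x0}). Since $u_*(\cdot-x_0)$ is curl-free, this gives $\frac{1}{42}\int_{B_1}|\curl u|^2\,dx\le D(u)-8\pi$ for the minimizer.
\item Your reduction then shows that $u_*$ is the unique minimizer for $\mathbf k=(1+\eta',1,1)$ with $\eta'>0$ small.
\item The computation of Remark~\ref{r:curl} converts this single minimality statement into $\frac{\eta'}{1+\eta'}\int_{B_1}|\curl u|^2\,dx\le D(u)-D(u_*)$ for \emph{all} $u\in H^1_{u_*}(B_1;\S^2)$.
\item Only at that point does Lemma~\ref{l:stabcurlOF} close the argument on the whole cone $k_1\le k_2+\delta\min(k_2,k_3)$. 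The value of $\delta$ is non-explicit because it comes from the compactness step.
\end{itemize}
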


The proof of~Theorem~\ref{t:minOF} relies on a compactness argument and we are not able to provide an optimal value of \(\delta\).  
However, if we restrict ourselves to competitors which are continuous away from the origin, 
we can actually show minimality under the condition
 $k_1\leq k_2+\min(k_2,k_3)/8$, 
 which, for $k_2\geq k_3$, corresponds exactly to the regime of linear stability $k_1\leq k_2 +k_3/8$.

\begin{thm}\label{t:minOF0}
For any $k_1,k_2,k_3>0$ with $k_1\leq k_2 + \min(k_2,k_3)/8$,
the radial hedgehog $u_*$
is the unique minimizer of the Oseen-Frank energy $E_{\mathbf k}$ among maps in $H^1_{u_*}(B_1;\mathbb S^2)$ which are continuous in $\overline B_1\setminus\lbrace 0\rbrace$.
\end{thm}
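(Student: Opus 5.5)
The strategy is to reduce to the equal-constants case plus a stability estimate for $u_*$ as a harmonic map. Write $u=u_*+\ldots$ and use the pointwise identity $|\curl u|^2=(u\cdot\curl u)^2+|u\times\curl u|^2$ for $|u|=1$. Together with the null Lagrangian identity \eqref{eq171}, this gives, for $u\in H^1_{u_*}(B_1;\S^2)$,
\begin{align*}
E_{\mathbf k}(u)-E_{\mathbf k}(u_*) &= k_1\big(D(u)-D(u_*)\big)+(k_2-k_1)\int_{B_1}(u\cdot\curl u)^2\,dx \\
&\quad+(k_3-k_1)\int_{B_1}|u\times\curl u|^2\,dx\,,
\end{align*}
because $\curl u_*=0$, so both curl terms vanish at $u_*$. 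When $k_2\ge k_1$ and $k_3\geq k_1$ this is immediate from \cite{BCL86}. In general, by monotonicity in $k_3$ and $k_2$, it suffices to treat the extremal case $k_1=k_2+\min(k_2,k_3)/8$. After normalizing, it is enough to prove
\begin{equation*}
D(u)-D(u_*)\ \ge\ \frac{c}{8}\int_{B_1}\Big((u\cdot\curl u)^2+\min(1,\kappa)\,|u\times\curl u|^2\Big)dx
\end{equation*}
with appropriate constants, where $\kappa=k_3/k_2$. Strictness follows from uniqueness in \cite{BCL86}.

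The key step is therefore a quantitative stability estimate for $u_*$ among maps continuous away from $0$:
\[
D(u)-D(u_*)\ \ge\ \tfrac18\int_{B_1}|\curl u|^2\,dx
\]
(or the split version above). For $u$ continuous on $\overline B_1\setminus\{0\}$, each sphere $\partial B_r$ is mapped with degree $1$, since the degree is constant in $r$ and equals $1$ at $r=1$. I would write $|\nabla u|^2=|\partial_r u|^2+r^{-2}|\nabla_T u|^2$ and use the sharp inequality $\int_{\S^2}|\nabla_T v|^2\ge 8\pi|\deg v|$, together with its stability version: the excess $\int_{\S^2}|\nabla_T v|^2-8\pi$ controls the distance of $v$ to the Möbius maps. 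Integrating in $r$ gives the lower bound $8\pi$ for the tangential part, plus $\int|\partial_ru|^2$ and the tangential excess. It then remains to bound $|\curl u|^2$ by $8|\partial_r u|^2+8 r^{-2}\big(|\nabla_T u|^2-2\,u\cdot(\partial_\theta u\times\partial_\phi u)/\sin\theta\big)$. The bracket is the pointwise conformality defect, which integrates to the excess over $8\pi$ on each sphere. To do this, I would decompose $\curl u$ into normal and tangential components, and treat the Jacobian term as the degree density.

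The constant $1/8$ should match Hélein's instability threshold, which comes from the second variation along $u_*+\varphi(r)$ tangential twist perturbations. Accordingly, I would compute the linearized version first to identify the sharp pointwise inequality, and then verify that it holds nonlinearly. This step is the main obstacle: the nonlinear pointwise or sphere-wise inequality with the sharp constant $1/8$ and the $\min(k_2,k_3)$ weighting. If a pointwise bound fails, the fallback is a spherewise estimate using the Hopf-type decomposition of $|\nabla_T u|^2$ into $|\partial u|^2$ and $|\bar\partial u|^2$ parts, where degree $1$ fixes the difference of their integrals.
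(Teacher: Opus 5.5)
There is a genuine gap, and it starts in your reduction, which misidentifies the inequality needed when $k_3<k_2$. Write $I_1=\int_{B_1}(\dv u)^2$, $I_2=\int_{B_1}(u\cdot\curl u)^2$, $I_3=\int_{B_1}|u\times\curl u|^2$. In your (correct) identity the coefficient of $I_3$ is $k_3-k_1$, which becomes \emph{more} negative as $k_3$ decreases. Normalize $k_2=1$, $k_3=\kappa<1$, $k_1=1+\kappa/8$. What you actually need is
\[
\Big(1+\frac{\kappa}{8}\Big)\big(D(u)-D(u_*)\big)\ \ge\ \frac{\kappa}{8}\,I_2+\Big(1-\frac{7\kappa}{8}\Big)I_3 ,
\]
not an inequality with the small weight $\frac c8\min(1,\kappa)$ on $I_3$. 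As $\kappa\to0$ this reads $D(u)-D(u_*)\ge I_3$, i.e.\ $I_1+I_2\ge 16\pi$. That is the minimality of $u_*$ for $E_{(1,1,0)}$ \cite[Proposition~4]{ou92}, which you never invoke. No estimate $D(u)-D(u_*)\ge c\int|\curl u|^2$ with $c<1$ can replace it: on its own such an estimate only gives $k_1\le\min(k_2,k_3)/(1-c)$, which misses part of the range when $k_3<k_2$. The paper's Lemma~\ref{l:stabcurlOF} obtains the $\min(k_2,k_3)$ condition by a convex combination, with weight $t=(k_1-k_2)/(ck_1)$, of $I_1+(1-c)(I_2+I_3)\ge16\pi$ and $I_1+I_2\ge16\pi$.

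Your key step also cannot work as designed.

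\textbf{Wrong constant.} The constant needed is $c=1/9$ (so that $c/(1-c)=1/8$), and $1/8$ is false. Take $u=R_{t\alpha(|x|)}u_*$, with $R_\beta$ the rotation of angle $\beta$ about $e_3$ and $\alpha(1)=0$. Then exactly $D(u)-8\pi=\frac{8\pi}{3}t^2\int_0^1r^2\alpha'^2\,dr$, while $\int_{B_1}|\curl u|^2=\frac{8\pi}{3}t^2\int_0^1(r^2\alpha'^2+2\alpha^2)\,dr+o(t^2)$. Bounded near-optimizers of $\int_0^1\alpha^2\le4\int_0^1r^2\alpha'^2$ push the ratio down to $1/9$.

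\textbf{Wrong mechanism.} The same family rules out any pointwise or sphere-by-sphere bound of $|\curl u|^2$ by $|\partial_r u|^2$ plus the conformality defect. On a shell where $\alpha$ is a nonzero constant, $\partial_r u=0$ and each $u^{(r)}$ is a rotation, so the defect is zero; yet $\curl u=\frac{2\sin(t\alpha)}{|x|}e_3\neq0$ on the $x_3$-axis. The curl sees the zeroth-order deviation $u-u_*$, which can only be controlled globally in $r$, through $u=u_*$ on $\partial B_1$.

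\textbf{What the paper does instead.} It uses the spherewise identity
\[
\int_{\S^2}|\nabla_\omega u^{(r)}|^2-8\pi=\int_{\S^2}|\nabla_\omega(u^{(r)}-u_*)|^2-2\int_{\S^2}|u^{(r)}-u_*|^2,
\]
which follows from $-\Delta_\omega u_*=2u_*$ and $|u|=1$. Hardy's inequality in $r$ then absorbs the negative term, which fixes $\theta=1/9$. Finally, $\int|\curl u|^2=\int|\curl(u-u_*)|^2\le\int|\nabla(u-u_*)|^2$ by the null Lagrangian for $u-u_*\in H^1_0$.

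\textbf{Uniqueness.} It does not follow from \cite{BCL86}, since $E_{\mathbf k}(u)=E_{\mathbf k}(u_*)$ does not force $D(u)=D(u_*)$. The paper obtains it from the equality case in Hardy's inequality.
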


\begin{rem}
As noted in \cite{CT90}, even though the second variation of $E_{\mathbf k}$ at $u_*$ is positive definite for $k_1<k_2+k_3/8$,
and this implies local minimality of $u_*$ with respect to small Lipschitz perturbations,
it is not clear whether this implies local minimality of $u_*$
with respect to the natural $H^1$ distance.
This is because the functional setting is not smooth enough,
as can be seen e.g. by moving the singularity from the origin  \cite[Proposition~5]{CT90}.
In fact, the map $u_*$ is a minimizer if and only if it is a local minimizer (in the $H^1$ distance).
Assume indeed that $u_*$ is not a minimizer. Then, there exists $\tilde u\in H^1_{u_*}(B_1;\S^2)$ such that $E_{\mathbf k}(\tilde u)< E_{\mathbf k}(u_*)$.
Extend $\tilde u$ to $\R^3$ by $u_*$, and define $\tilde u_\e(x)=\tilde u(x/\e)$.  Then, $\tilde u_\e\to u_*$ in $H^1$ as $\e\to 0$, and $E_{\mathbf k}(\tilde u_\e)-E_{\mathbf k}(u_*)=\e (E_{\mathbf k}(\tilde u)-E_{\mathbf k}(u_*))<0$, so $u_*$ is not a local minimizer.
\end{rem}

Both Theorem~\ref{t:minOF} and Theorem~\ref{t:minOF0}
rely on
 stability estimates for $u_*$ as a minimizer of the Dirichlet energy $D(u)=\int_{B_1} |\nabla u|^2\, dx$ among competitors with one singularity,
 which are of independent interest.

\begin{thm}\label{t:stabx0}
There exists $c_*>0$ such that, 
if
$u\in H^1_{u_*}(B_1;\mathbb S^2)\cap C^0(\overline B_1\setminus \lbrace x_0\rbrace)$
for some $x_0\in B_1$, we have
\begin{align}\label{eq:stabx0}
c_* 
\int_{B_1}|\nabla u_*(\cdot -x_0)-\nabla u|^2\, dx 
\leq
D(u)-D(u_*)
\,.
\end{align}
When \(x_0=0\), one can take $c_*=1/9$.
\end{thm}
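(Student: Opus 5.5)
The plan is to treat $x_0=0$ first by an exact algebraic identity combined with a degree (calibration) argument on spheres, and then to reduce a general $x_0$ to that case.

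\textbf{Case $x_0=0$.} Write $u=u_*+w$. Since $|u|=|u_*|=1$, we have $2u_*\cdot w=-|w|^2$. The map $u_*$ is harmonic with $-\Delta u_*=|\nabla u_*|^2u_*=\frac{2}{r^2}u_*$. Integrating the cross term by parts (the boundary term vanishes because $w=0$ on $\partial B_1$, and the origin is harmless since $w\in H^1$) gives the exact identity
\begin{align*}
D(u)-D(u_*)=\int_{B_1}\Big(|\nabla w|^2-\frac{2|w|^2}{r^2}\Big)\,dx .
\end{align*}
Hardy's inequality only provides $\int|\nabla w|^2\geq \frac14\int |w|^2/r^2$, so this identity alone is useless. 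The topological information has to enter through a second lower bound.

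For that second bound, split $|\nabla u|^2=|\partial_r u|^2+r^{-2}|\nabla_{\S^2}u(r\cdot)|^2$. Continuity away from $0$ together with $u=u_*$ on $\partial B_1$ forces $\deg u|_{\partial B_r}=1$ for every $r$. The pointwise inequality $|\nabla_{\S^2}v|^2\geq 2\,v\cdot(\partial_1v\times\partial_2v)$, whose defect is the conformality defect $|\partial_1 v-v\times\partial_2 v|^2$ in a local orthonormal frame, then gives $\int_{\S^2}|\nabla_{\S^2}u(r\cdot)|^2\geq 8\pi$ on each sphere. Hence
\begin{align*}
D(u)-D(u_*)\geq \int_{B_1}|\partial_r w|^2\,dx+\int_0^1\!\!\int_{\S^2}\big(|\nabla_{\S^2}v_r|^2-2J(v_r)\big)\,d\sigma\,dr,
\end{align*}
where $v_r=u(r\,\cdot)$ and $J(v)=v\cdot(\partial_1v\times\partial_2v)$.

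I would then take a convex combination $\lambda\,(\text{identity})+(1-\lambda)(\text{degree bound})$. The aim is to show that for a suitable $\lambda$ the sum dominates $\frac19\int|\nabla w|^2$. To do so, I would expand $w(r,\cdot)$ in spherical harmonics on each sphere. The degree-one condition is what rules out the dangerous modes. The $\ell=1$ modes, corresponding to rotations and conformal dilations of the identity, are exactly where the tangential defect degenerates. For these modes the control must come from $\int|\partial_r w|^2$ together with the boundary condition $w(1,\cdot)=0$, via a one-dimensional Hardy/Poincaré inequality in $r$. For $\ell\geq2$, the tangential quadratic form $\ell(\ell+1)-2$ beats the $2|w|^2$ term with room to spare. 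Optimizing $\lambda$ against these mode-wise constants is where I expect the value $1/9$ to appear.

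\textbf{Main obstacle.} The degree bound is nonlinear, since $J$ is cubic in $w$. The mode argument therefore needs a quantitative rigidity statement for degree-one maps $\S^2\to\S^2$: the defect $\int|\nabla v|^2-8\pi$ must control the distance from $v$ to the Möbius group (a Bernand-Mantel–Muratov–Simon type estimate). Only after that can we show that the remaining Möbius-parameter freedom, which varies with $r$, is paid for by $|\partial_r u|^2$. I would first try to avoid the full rigidity theorem by using the exact identity to handle large $|w|$ directly.

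\textbf{General $x_0$.} I would argue by contradiction and compactness. Suppose there are maps $u_n$, singular at points $x_n$, with $D(u_n)-D(u_*)\leq \frac1n\int|\nabla u_*(\cdot-x_n)-\nabla u_n|^2$. The Brezis–Coron–Lieb energy bound forces $D(u_n)\to D(u_*)$, and uniqueness of the minimizer $u_*$ then forces $u_n\to u_*$ in $H^1$, so that $x_n\to0$. A blow-up around $x_n$ then brings us back to the $x_0=0$ estimate, up to an error $\int|\nabla u_*(\cdot-x_n)-\nabla u_*|^2\lesssim|x_n|$. That error has to be absorbed using the energy cost of displacing the singularity. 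This last absorption is the most delicate step, and it is also why only a non-explicit constant $c_*$ is obtained in this case.
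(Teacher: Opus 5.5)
There is a genuine gap. It is in the general case; the $x_0=0$ case is also left unfinished, though only because you misjudge what is hard.

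\textbf{The case $x_0=0$.} You already have every ingredient, and the ``main obstacle'' you describe does not exist, because the degree is only needed qualitatively. Drop the nonnegative conformality defect from your second bound. It then says $D(u)-D(u_*)\geq\int_{B_1}|\partial_r w|^2\,dx$. This is the integrated form of the paper's observation that, on a.e.\ sphere, $\int_{\mathbb S^2}|\nabla_\omega w|^2-2\int_{\mathbb S^2}|w|^2=\int_{\mathbb S^2}|\nabla_\omega u^{(r)}|^2-8\pi\geq 0$. Now add $\tfrac19$ of your exact identity to $\tfrac89$ of this bound, and use Hardy, $\int_{B_1}|\partial_r w|^2\geq\frac14\int_{B_1}|w|^2/r^2$ (valid since $w=0$ on $\partial B_1$). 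This gives
\[
D(u)-D(u_*)\geq \tfrac19\int_{B_1}|\nabla w|^2\,dx+\Big(\tfrac89\cdot\tfrac14-\tfrac29\Big)\int_{B_1}\frac{|w|^2}{r^2}\,dx=\tfrac19\int_{B_1}|\nabla w|^2\,dx ,
\]
which is exactly the paper's proof and the source of the constant $1/9$. No spherical harmonics and no Bernand-Mantel--Muratov--Simon rigidity are needed. The mode-by-mode programme you sketch instead is not carried out. As stated it would not be a proof: $w$ is nonlinearly constrained by $|u_*+w|=1$, the Möbius parameters drift with $r$, and you leave the rigidity step open.

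\textbf{The general case.} Here the argument fails as written. The error you plan to absorb, $\int_{B_1}|\nabla u_*(\cdot-x_n)-\nabla u_*|^2$, is not only $\lesssim|x_n|$ but also $\gtrsim|x_n|$, by the scaling computation in Lemma~\ref{l:ex}. The energy cost of displacing the singularity is only quadratic: Brezis--Coron--Lieb give $D(u)-8\pi\geq\frac{8\pi}{3}|x_0|^2$, and the translated hedgehogs $u^{x_0}$ show that $D-8\pi\lesssim|x_0|^2$ is attained. So a linear error can never be absorbed. In your contradiction scheme it only yields $|x_n|\lesssim 1/n$, which is no contradiction.

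The missing idea is to stop comparing with $u_*$ and rerun the $x_0=0$ argument \emph{centred at $x_0$}:
\begin{itemize}
\item Extend $u$ by $u_*$ outside $B_1$ and set $\tilde u=u(x_0+\cdot)$ on $B_2\supset B_1-x_0$. This map has degree one on every sphere $\partial B_r$.
\item Use Hardy with a boundary term, $\frac14\int_0^2 f^2\,dr\leq\int_0^2 f'^2r^2\,dr+f(2)^2$.
\item The only error is then the boundary mismatch $\frac29\int_{\partial B_2}|u_*(\cdot+x_0)-u_*|^2\,d\mathcal H^2\leq\frac{32\pi}{9}|x_0|^2$, which is quadratic.
\item One also has the favourable comparison $\int_{B_2(x_0)}|\nabla u|^2-16\pi\leq D(u)-8\pi$, because $|\nabla u_*|^2$ is radially decreasing.
\end{itemize}
The Brezis--Coron--Lieb bound absorbs this quadratic error. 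This gives the explicit constant $c_*=\frac19\cdot\frac37=\frac1{21}$, with no compactness argument, contrary to your expectation that only a non-explicit constant is available.
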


Even if the above statement is restricted to maps \(u\) having just one singularity, it is sufficient for the proof of Theorem~\ref{t:minOF}, due to the fact that for $\mathbf k\approx \mathbf 1$, minimizers of $E_{\mathbf k}$ have only one singularity \cite{AL88}.
However, we expect that a suitable formulation of~\eqref{eq:stabx0} is still valid on  the whole set \(H^1_{u_*}(B_1;\mathbb S^2)\):

\begin{conj}\label{conj:stab}
There exists $c_*>0$ such that
\begin{align}
\label{eq:stab_conj}
c_* \inf_{x_0\in B_{1}}
\int_{B_1}|\nabla u_*(\cdot -x_0)-\nabla u|^2\, dx 
\leq 
D(u)-D(u_*)
\,,
\end{align}
for all  $u\in H^1_{u_*}(B_1;\mathbb S^2)$.
\end{conj}

Since the map $u_*$ is the unique minimizer of the 
Dirichlet energy
$D$ in  $H^{1}_{u_*}(B_1;\S^2)$,
and the second variation of $D$ at $u_*$ is a positive definite quadratic form \cite{CT90},
one might be tempted to expect 
the nondegenerate, quadratic stability estimate: 
\begin{align}\label{eq:nondeg_stab}
D(u)-D(u_*)\geq c_*\int_{B_1}|\nabla u-\nabla u_*|^2\, dx\,.
\end{align}
But the example
\begin{align*}
u^{x_0}(x)=u_*(x- \eta(|x|)x_0)
\,,
\end{align*}
for a smooth function $\eta$ with $\mathbf 1_{r\leq 1/2}\leq \eta(r)\leq \mathbf 1_{r\leq 3/4}$ and $|x_0|\leq 1/4$,
actually shows that 
\eqref{eq:nondeg_stab} is not possible: one checks by direct calculation that
\begin{align*}
\int_{B_1}|\nabla u^{x_0}-\nabla u_*|^2\, dx \gtrsim
\sqrt{D(u^{x_0})-D(u_*)}\,,
\end{align*}
 see Lemma~\ref{l:ex} below.
The main feature of this example is to translate the singularity,
and Conjecture~\ref{conj:stab} 
asserts that this should be the only source of degeneracy.

\begin{rem}
In higher dimensions $n\geq 4$, the radial map $u_*(x)=x/|x|$ is also a minimizer of the Dirichlet energy among $\mathbb S^{n-1}$-valued maps agreeing with $u_*$ on the boundary of $B_1\subset\R^n$ \cite{lin87}.
For $n\geq 7$ the nondegenerate stability estimate \eqref{eq:nondeg_stab} is known :  see the proof of \cite[Theorem~A~(ii)]{Hong2000} and remark a in that paper.
For $n=4$, translations $u^{x_0}$ also provide a counterexample, see Remark~\ref{r:ux0higherdim}.
The validity of \eqref{eq:nondeg_stab} for $n\in \lbrace 5,6\rbrace$ seems open.
Note that, thanks to  \cite[Theorem~2.1]{helein89} it would be enough to check it on maps $u$ which are continuous in $\overline B_1\setminus \lbrace 0\rbrace$.
\end{rem}

From~\eqref{eq:stabx0}, using that $|\curl v|^2\leq 2|\nabla v|^2$ and  that \(u_*\) as well as its translations are curl-free, one gets
\begin{equation}\label{eq:curl}
c_*\int_{B_1}|\curl  u|^2\,dx \leq D(u)-D(u_*).
\end{equation}
As a matter of fact, this weaker form of ~\eqref{eq:stabx0} is sufficient to establish Theorems~\ref{t:minOF} and \ref{t:minOF0}.

\begin{rem}\label{r:curl}
Another indication supporting Conjecture~\ref{conj:stab} is that 
the nondegenerate  estimate~\eqref{eq:curl} on $\curl u$ is actually valid among all competitors \(u\in H^1_{u_*}(B_1;\mathbb S^2)\) and not just for  those which are continuous on \(\overline{B}_1\) except at one point.
To check this, 
note that, in view of  Theorem~\ref{t:minOF}, there exists $\delta >0$ such that the radial map $u_*$ minimizes $E_{\mathbf k}$, with $\mathbf k=(1+\delta,1,1)$.
Using this minimality and the fact that $\curl u_*=0$, we obtain that for every \(u\in H^{1}_{u_*}(B_1;\mathbb{S}^2)\), 
\begin{align*}
\frac{\delta}{1+\delta}\int_{B_1}|\curl u|^2\, dx 
&
=E_{\mathbf 1}(u) -\frac{1}{1+\delta}E_{\mathbf k}(u)
\\
&
\leq E_{\mathbf 1}(u)-\frac{1}{1+\delta}E_{\mathbf k}(u_*)
=E_{\mathbf 1}(u)-E_{\mathbf 1}(u_*)\,, 
\end{align*}
and we have already seen that the last expression equals
$D(u)-D(u_*)$, see~\eqref{eq171}.
\end{rem}

\subsubsection*{Organization of the article}
We prove the stability estimate of Theorem~\ref{t:stabx0} in \textsection\ref{s:stab} and the minimality statements of Theorem~\ref{t:minOF} and Theorem~\ref{t:minOF0} in \textsection\ref{s:minOF}.

\subsubsection*{Acknowledgments}

XL is supported by the ANR project ANR-22-CE40-0006.
PB and XL warmly thank Rémy Rodiac for very  inspiring discussions all along this project, 
as well as André Guerra for his insightful comments.

\section{Stability among maps with one singularity}\label{s:stab}

In this section we prove
the quadratic stability of $u_*$ modulo translations for the Dirichlet energy among sphere-valued maps which are discontinuous at just one point, as stated in Theorem~\ref{t:stabx0}.

\subsection{Singularity at the origin}

We start by presenting the short proof of Theorem~\ref{t:stabx0} for maps $u\in H^1_{u_*}(B_1;\S^2)$ which are continuous away from the origin $x_0=0$.
This assumption ensures that the restrictions of $u$ to concentric spheres have topological degree equal to one,
hence enables us to use the minimality of $u_*\colon\mathbb S^2\to\mathbb S^2$ in
 its homotopy class.

\begin{lem}\label{l:stab_sing_origin}
If $u\in H^1_{u_*}(B_1;\mathbb S^2)$ is continuous in $\overline B_1\setminus \lbrace 0\rbrace$, then
\begin{align}\label{eq:stab_sing_origin}
\frac 19 \int_{B_1}|\nabla u-\nabla u_*|^2\, dx \leq \int_{B_1}|\nabla u|^2\, dx - 8\pi\,.
\end{align}
Equality occurs if and only if $u=u_*$.
\end{lem}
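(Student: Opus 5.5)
The plan is to work in polar coordinates $x=r\theta$, with $\omega(r,\theta)=u(r\theta)$, and reduce \eqref{eq:stab_sing_origin} to two ingredients. The first is the degree lower bound on each sphere: $\int_{\S^2}|\nabla_T\omega(r,\cdot)|^2\geq 8\pi$. The second is a one-dimensional Hardy inequality in $r$. I will not try to prove a quadratic stability estimate for $\mathrm{id}$ on a single sphere, since that is false: Möbius maps have degree one and energy $8\pi$. The radial derivative has to absorb that degeneracy.

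\emph{Step 1: expand the left-hand side.} We have $|\nabla u|^2=|\partial_r\omega|^2+r^{-2}|\nabla_T\omega|^2$ and $\partial_r u_*=0$. Moreover $\int_{\S^2}\nabla_T\omega:\nabla_T\theta\,d\theta=\int_{\S^2}\omega\cdot(-\Delta_T\theta)=2\int_{\S^2}\omega\cdot\theta$. This gives $\int_{B_1}\nabla u:\nabla u_*=2\int_0^1\!\int_{\S^2}\omega\cdot\theta$. Since $|\omega|=|\theta|=1$, we can write $2\,\omega\cdot\theta=2-|\omega-\theta|^2$. Together with $D(u_*)=8\pi$, this yields
\begin{align*}
\int_{B_1}|\nabla u-\nabla u_*|^2\,dx = D(u)-8\pi+2\int_0^1\!\!\int_{\S^2}|\omega-\theta|^2\,d\theta\,dr .
\end{align*}
Hence \eqref{eq:stab_sing_origin} is equivalent to
\begin{align*}
\int_0^1\!\!\int_{\S^2}|\omega-\theta|^2\,d\theta\,dr\leq 4\,\big(D(u)-8\pi\big).
\end{align*}

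\emph{Step 2: lower bound for the deficit.} We write $D(u)-8\pi=\int_0^1\!\int_{\S^2} r^2|\partial_r\omega|^2+\int_0^1\big(\int_{\S^2}|\nabla_T\omega(r,\cdot)|^2-8\pi\big)dr$. For a.e.\ $r$, $\omega(r,\cdot)\in H^1(\S^2;\S^2)$ is continuous. By continuity of $u$ on $\overline B_1\setminus\{0\}$ and $u=u_*$ on $\partial B_1$, it is homotopic to the identity, so it has degree one. The pointwise inequality $|\nabla_T\omega|^2\geq 2|\omega\cdot(\partial_1\omega\times\partial_2\omega)|$ then shows that the second integral is $\geq 0$. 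Therefore $D(u)-8\pi\geq\int_{\S^2}\int_0^1 r^2|\partial_r\omega|^2\,dr\,d\theta$.

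\emph{Step 3: Hardy inequality.} Fix $\theta$ and set $f(r)=\omega(r,\theta)-\theta$. For a.e.\ $\theta$ this is absolutely continuous on $(0,1]$, with $f(1)=0$ and $|f|\leq 2$. Integrating by parts,
$\int_0^1|f|^2dr=\big[r|f|^2\big]_0^1-2\int_0^1 r f\cdot f'\,dr\leq 2\big(\int_0^1|f|^2\big)^{1/2}\big(\int_0^1 r^2|f'|^2\big)^{1/2}$. Both boundary terms vanish: at $r=1$ because $f(1)=0$, and at $r=0$ because $f$ is bounded. Hence $\int_0^1|f|^2\leq 4\int_0^1 r^2|f'|^2$. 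Integrating over $\theta$ and combining with Step 2 gives exactly the inequality of Step 1, with the constant $1/9$.

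\emph{Equality case and main obstacle.} If equality holds, then equality holds in the Cauchy--Schwarz step for a.e.\ $\theta$. This forces $r f'=-\lambda f$ with $\lambda=1/2$, so $f\propto r^{-1/2}$, which contradicts boundedness unless $f\equiv0$; thus $u=u_*$. The step needing the most care is Step 2's justification that a.e.\ slice has degree one and that the slice energies are finite. I would handle this via Fubini, which gives $\omega(r,\cdot)\in H^1$ for a.e.\ $r$, together with continuity away from the origin, which makes the degree, computed as $\frac1{4\pi}\int\omega\cdot(\partial_1\omega\times\partial_2\omega)$, agree with the topological degree. I would also check the routine fact that $f$ is absolutely continuous in $r$ on $(0,1]$ for a.e.\ $\theta$.
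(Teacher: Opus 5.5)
Your argument is correct and is essentially the paper's proof. It uses the same ingredients: the degree bound $\int_{\S^2}|\nabla_T\omega(r,\cdot)|^2\geq 8\pi$ on a.e.\ sphere, the identity $-\Delta_T\theta=2\theta$ together with $2\,\omega\cdot\theta=2-|\omega-\theta|^2$, the radial Hardy inequality with constant $1/4$, and the same equality argument via equality in Hardy. The only difference is bookkeeping: you write the exact identity $\int_{B_1}|\nabla u-\nabla u_*|^2\,dx=D(u)-8\pi+2\int_0^1\!\int_{\S^2}|\omega-\theta|^2$ and reduce to a factor-$4$ bound, whereas the paper takes a convex combination of the sphere-wise deficit with weight $1/9$, which is the same computation.
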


\begin{proof}
For a.e. $r\in (0,1)$, the map $ u^{(r)} \colon\omega\mapsto u(r\omega)$ belongs to $H^1(\mathbb S^2;\mathbb S^2)$,
 and it has degree one thanks to the continuity assumption.
Classically, the inequality $|\nabla_\omega u^{(r)}|^2\geq 2|\det(\nabla_{\omega}u^{(r)})|$ 
(where $\nabla_\omega u^{(r)}$ is seen as a linear map from $T_\omega\mathbb S^2$ to $T_{u^{(r)}(\omega)}\mathbb S^2$) therefore implies
\begin{align*}
\int_{\mathbb S^2} |\nabla_\omega u^{(r)}|^2\,d\mathcal H^2(\omega)\geq 8\pi |\deg(u^{(r)})|=8\pi \,.
\end{align*}
Thus we have, using also classically that $-\Delta_\omega u_*=2u_*$ and $2u_*\cdot(u_*-u^{(r)})=|u^{(r)}-u_*|^2$,
\begin{align*}
0
&
\leq 
\int_{\mathbb S^2}|\nabla_\omega u^{(r)}|^2\, d\mathcal H^2(\omega) -8\pi
\\
&
=\int_{\mathbb S^2}|\nabla_\omega u^{(r)}|^2\, d\mathcal H^2(\omega)-\int_{\mathbb S^2}|\nabla_\omega u_*|^2\, d\mathcal H^2(\omega)
\\
&
=\int_{\mathbb S^2}|\nabla_\omega u^{(r)}-\nabla_\omega u_*|^2\, d\mathcal H^2(\omega) 
- 2\int_{\mathbb S^2}\nabla_\omega u_*\cdot \nabla_\omega(u_*-u^{(r)})\,d\mathcal H^2(\omega)
\\
&
=\int_{\mathbb S^2}|\nabla_\omega u^{(r)}-\nabla_\omega u_*|^2\, d\mathcal H^2(\omega) 
- 4\int_{\mathbb S^2}u_*\cdot (u_*-u^{(r)})\,d\mathcal H^2(\omega)
\\
&
=\int_{\mathbb S^2}|\nabla_\omega u^{(r)}-\nabla_\omega u_*|^2\, d\mathcal H^2(\omega) 
- 2\int_{\mathbb S^2}|u^{(r)}-u_*|^2\,d\mathcal H^2(\omega)\,.
\end{align*}
For any $\theta\in [0,1]$, we deduce
\begin{align*}
&
\int_{\mathbb S^2}|\nabla_\omega u^{(r)}|^2\, d\mathcal H^2(\omega) -8\pi
\\
&
\geq 
\theta\Bigg(
\int_{\mathbb S^2}|\nabla_\omega u^{(r)}-\nabla_\omega u_*|^2\, d\mathcal H^2(\omega) 
- 2\int_{\mathbb S^2}|u^{(r)}-u_*|^2\,d\mathcal H^2(\omega)\bigg)\,.
\end{align*}
Integrating this with respect to $r\in (0,1)$ and adding the integral of $|\partial_r u|^2$, we find
\begin{align*}
&
\int_{B_1}|\nabla u|^2\, dx - 8\pi
\\
&
\geq \theta\int_{B_1}|\nabla u-\nabla u_*|^2\, dx
+(1-\theta)\int_{B_1}|\partial_r u|^2\, dx - 2\theta\int_{B_1}\frac{|u-u_*|^2}{|x|^2}\, dx\,.
\end{align*}
Further, applying Hardy's inequality
\begin{align*}
\frac 14 \int_0^1 |f(r)|^2\, dr \leq \int_0^1 |f'(r)|^2 r^2 dr\qquad\text{if }f(1)=0\,,
\end{align*}
to $f(r)=f_\omega(r)=u(r\omega)-u_*(r\omega)$ for a.e. $\omega\in\mathbb S^2$,
and integrating with respect to $\omega$, we get
\begin{align*}
\int_{B_1}|\partial_r u|^2\, dx \geq \frac 14 
\int_{B_1}\frac{|u-u_*|^2}{|x|^2}\, dx\,.
\end{align*}
Plugging this into the above estimate yields
\begin{align*}
&
\int_{B_1}|\nabla u|^2\, dx - 8\pi
\\
&
\geq \theta\int_{B_1}|\nabla u-\nabla u_*|^2\, dx
+\bigg(\frac{1-\theta}{4}-2\theta\bigg)
\int_{B_1}\frac{|u-u_*|^2}{|x|^2}\, dx\,.
\end{align*}
Choosing $\theta=1/9$ gives \eqref{eq:stab_sing_origin}.
Moreover, if equality occurs, then there must be equality in Hardy's inequality, which implies $f_\omega=0$ for a.e. $\omega$, and thus $u=u_*$.
\end{proof}

\subsection{Singularity away from the origin}

This section is devoted to the proof of Theorem~\ref{t:stabx0}. It relies on a generalization of Lemma~\ref{l:stab_sing_origin} where \(u_*\) is replaced by \(u_*(\cdot-x_0)\) for any \(x_0\in B_1\), 
see Lemma~\ref{l:stab_sing_x0}. We also need an estimate of $x_0$ in terms of the energy  deficit \(D(u)-D(u_*)\), whenever \(u\) is continuous except at \(x_0\). This inequality, which is encapsulated in Lemma~\ref{l:lower_bd_sing_x0}, is a particular case of more general estimates established in~\cite{BCL86}.

\begin{lem}\label{l:stab_sing_x0}
For any $x_0\in B_1$ and  $u\in H^1_{u_*}( B_1;\S^2)\cap C^0(\overline B_1\setminus \lbrace x_0\rbrace)$,  we have
\begin{align}\label{eq:stab_sing_x0}
\frac 19 \int_{B_1}|\nabla u-\nabla u_*(\cdot - x_0)|^2
\, dx
\leq \int_{B_1}|\nabla u|^2\, dx -8\pi +\frac{32\pi}{9}|x_0|^2.
\end{align}
\end{lem}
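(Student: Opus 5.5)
The plan is to repeat the proof of Lemma~\ref{l:stab_sing_origin} with spheres centered at $x_0$ instead of $0$, comparing $u$ with $v:=u_*(\cdot-x_0)$. The difficulty is that $B_1$ is not centered at $x_0$ and $u$ agrees with $u_*$, not with $v$, on $\partial B_1$. To get around this, I first extend $u$ by $u_*$ on $\R^3\setminus B_1$ and work on large balls $B_R(x_0)$, letting $R\to\infty$. The extended map is continuous on $\R^3\setminus\lbrace x_0\rbrace$. Moreover $|u-v|=|u_*-u_*(\cdot-x_0)|\lesssim |x_0|/|x|$ at infinity, so $u-v$ vanishes at infinity along rays from $x_0$.

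\emph{Step 1 (spherical estimate).} For a.e.\ $r>0$, the restriction $\omega\mapsto u(x_0+r\omega)$ has degree one, by continuity and because it equals $u_*$ on large spheres. The restriction of $v$ is exactly the identity map of $\mathbb S^2$, so $-\Delta_\omega v=2v$. The computation in Lemma~\ref{l:stab_sing_origin} then applies verbatim and gives, for every $\theta\in[0,1]$,
\begin{align*}
\int_{\mathbb S^2}|\nabla_\omega u|^2-8\pi\geq\theta\Big(\int_{\mathbb S^2}|\nabla_\omega (u-v)|^2-2\int_{\mathbb S^2}|u-v|^2\Big).
\end{align*}

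\emph{Step 2 (integration and Hardy).} I integrate in $r\in(0,R)$ and add $\int|\partial_r u|^2$, where $\partial_r$ is now the radial derivative about $x_0$. Since $\partial_r v=0$, we have $\partial_r u=\partial_r(u-v)$. Hardy's inequality on $(0,\infty)$ applies to $f_\omega(r)=(u-v)(x_0+r\omega)$, which tends to $0$ as $r\to\infty$. The truncation at $R$ costs only $o(1)$ thanks to the $|x_0|/|x|$ decay. With $\theta=1/9$ exactly as before, I expect
\begin{align*}
\frac19\int_{B_R(x_0)}|\nabla(u-v)|^2\,dx\leq \int_{B_R(x_0)}|\nabla u|^2\,dx-8\pi R+o(1).
\end{align*}
On the left I keep only the part over $B_1$, since the part over $B_R(x_0)\setminus B_1$ is nonnegative and can be dropped. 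On the right I split $\int_{B_R(x_0)}|\nabla u|^2=D(u)+\int_{B_R(x_0)\setminus B_1}|\nabla u_*|^2$. This reduces \eqref{eq:stab_sing_x0} to the explicit inequality
\begin{align*}
\limsup_{R\to\infty}\Big(\int_{B_R(x_0)\setminus B_1}\frac{2\,dx}{|x|^2}-8\pi R+8\pi\Big)\leq \frac{32\pi}{9}|x_0|^2 .
\end{align*}

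\emph{Step 3 (the explicit quantity), which I expect to be the main obstacle.} I compute $\int_{B_R(x_0)}|x|^{-2}\,dx$ with $a=|x_0|$ by slicing into spheres $\partial B_s$ centered at $0$. For $s<R-a$ the slice is the full sphere, of area $4\pi s^2$. For $R-a<s<R+a$ the slice is a spherical cap of area $\pi s\,(R^2-(s-a)^2)/a$. Subtracting $\int_{B_1}2|x|^{-2}=8\pi$ then gives the bracket in closed form, and the task is to bound its $\limsup$ by $\frac{32\pi}{9}a^2$.

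I expect this explicit term to be $O(a^2/R)$ and thus to vanish in the limit, which would give the lemma with an even better constant. If the bookkeeping turns out to be more delicate, two points need checking. First, the $o(1)$ error from truncating Hardy's inequality at finite $R$. Second, the possible mismatch between slicing about $x_0$ and about $0$. The fallback is to avoid $R\to\infty$ altogether. In that version I would work on $B_{1+a}(x_0)\supset B_1$ and apply Hardy with a boundary term at $r=1+a$, using $|u_*-v|\le 2a/|x|$ there. That boundary contribution, weighted by $2\theta=2/9$ and by the $1/4$ from Hardy, is the natural source of a term of the form $C|x_0|^2$, and I would aim to bound it by $\frac{32\pi}{9}|x_0|^2$.
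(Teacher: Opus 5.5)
Your argument is correct. It is the paper's argument with one change, and that change gives a stronger result. The paper also extends $u$ by $u_*$, applies the degree-one spherical estimate with $\theta=1/9$ on spheres centred at $x_0$, and uses Hardy's inequality with a boundary term. But it works on the fixed ball $B_2(x_0)$, whereas you let the radius $R\to\infty$.

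On $B_R(x_0)$ with $R\ge 1+|x_0|$, the Hardy boundary term is
\[
\frac89\cdot\frac{1}{2R}\int_{\partial B_R(x_0)}|u_*-u_*(\cdot-x_0)|^2\,d\mathcal H^2\le\frac{64\pi}{9R}|x_0|^2 ,
\]
using \eqref{eq:ineqab} in the form $|u_*-u_*(\cdot-x_0)|\le 2|x_0|/R$ on $\partial B_R(x_0)$. At $R=2$ this is exactly the term $\frac{32\pi}{9}|x_0|^2$ in the statement, and it is the only source of that term.

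Your Step~3 is easier than you fear. For $R\ge 1+|x_0|$ your bracket equals $\int_{B_R(x_0)}2|x|^{-2}\,dx-8\pi R$. This is $\le 0$ because $2|x|^{-2}$ is radially symmetric and decreasing, which is exactly how the paper handles the same comparison. Your slicing, with $a=|x_0|$, gives it in closed form as $\frac{2\pi(R^2-a^2)}{a}\ln\frac{R+a}{R-a}-4\pi R=-\frac{8\pi a^2}{3R}+O(a^4R^{-3})$.

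So your route proves the stronger bound $\frac19\int_{B_1}|\nabla u-\nabla u_*(\cdot-x_0)|^2\,dx\le D(u)-8\pi$. This would give Theorem~\ref{t:stabx0} with $c_*=1/9$ for every $x_0$, without Lemma~\ref{l:lower_bd_sing_x0}. The paper's fixed radius only saves a limiting argument.

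Two minor points:
\begin{itemize}
\item Hardy on $(0,\infty)$ needs $r f_\omega(r)^2\to 0$, not just $f_\omega\to 0$. This holds here since $|f_\omega(r)|\le 2|x_0|/r$. Alternatively, use the finite-interval version with boundary term $\frac R2|f_\omega(R)|^2$, as the paper does.
\item Your fallback at $R=1+|x_0|$, with the same crude bound, only yields $\frac{64\pi}{9(1+|x_0|)}|x_0|^2>\frac{32\pi}{9}|x_0|^2$. The paper's choice $R=2$ is what makes the stated constant come out.
\end{itemize}
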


\begin{proof}[Proof of Lemma~\ref{l:stab_sing_x0}]
Consider $u$ as defined on $\R^2$ by setting $u=u_*$ outside $B_1$,
and define $\tilde u(\tilde x)=u(x_0+\tilde x)$,
which has degree one on $\partial B_r$ for a.e. $r>0$.
Arguing exactly as in Lemma~\ref{l:stab_sing_origin}, we obtain therefore
\begin{align*}
&
\int_{B_2}|\nabla \tilde u|^2\, dx - 16\pi
\\
&
\geq \theta\int_{B_2}|\nabla \tilde u-\nabla u_*|^2\, dx
+(1-\theta)\int_{B_2}|\partial_r \tilde u|^2\, dx - 2\theta\int_{B_2}\frac{|\tilde u-u_*|^2}{|x|^2}\, dx\,,
\end{align*}
for any $\theta\in (0,1)$.
Then we apply
the Hardy inequality
\begin{align*}
\frac 14 \int_0^2 |f(r)|^2\, dr \leq \int_0^2 |f'(r)|^2 r^2 dr 
+  |f(2)|^2\,,
\end{align*}
which follows from the identity
\begin{align*}
(f')^2 r^2 +\frac 12 (rf^2)' -\frac 14  f^2 =r\big((\sqrt r f)'\big)^2\geq 0\,,
\end{align*}
to $f(r)=f_\omega(r)=\tilde u(r\omega)-u_*(r\omega)$ for a.e. $\omega\in\mathbb S^2$, 
and integrate with respect to $\omega$,
to find
\begin{align*}
\int_{B_2}|\partial_r \tilde u|^2\, dx 
\geq \frac 14\int_{B_2}\frac{|\tilde u -u_*|^2}{|x|^2}\, dx 
- \frac 14 \int_{\partial B_2} |\tilde u - u_*|^2\, d\mathcal H^2\,.
\end{align*}
Plugging this into the previous estimate
and using that $\tilde u=u_*(\cdot +x_0)$ on $\partial B_2$
 gives
\begin{align*}
&
\int_{B_2}|\nabla \tilde u|^2\, dx - 16\pi
+\frac{1-\theta}{4}\int_{\partial B_2}|u_*(\cdot +x_0) -u_*|^2\, d\mathcal H^2
\\
&
\geq \theta\int_{B_2}|\nabla \tilde u-\nabla u_*|^2\, dx
+\frac{1-9\theta}{4}\int_{B_2}\frac{|\tilde u-u_*|^2}{|x|^2}\, dx\,.
\end{align*}
Choosing $\theta=1/9$
and replacing $\tilde u=u(\cdot +x_0)$, 
we infer
\begin{align}
\label{eq:stab_sing_x0_interm}
&
\frac{1}{9}\int_{B_2(x_0)}|\nabla u -\nabla u_*(\cdot -x_0)|^2\, dx
\nonumber
\\
&
\leq
\int_{B_2(x_0)}|\nabla u|^2-16\pi 
+ \frac{2}{9}\int_{\partial B_2}
|u_*(\cdot+x_0)-u_*|^2\, d\mathcal H^2.
\end{align}
To estimate the right-hand side, first note that
\begin{align}
\frac{1}{4}\int_{\partial B_2}
\!\!\!
|u_*(\cdot+x_0)-u_*|^2\, d\mathcal H^2
&
=\int_{\mathbb S^2} \left| \frac{\omega +x_0/2}{|\omega +x_0/2|}-\omega\right|^2\,d\mathcal H^2(\omega)
\nonumber
\\
&
\leq 4\pi |x_0|^2\,,
\label{eq550}
\end{align}
where we used the fact that,
 for any \(a,b\in \R^3\setminus \{0\}\),
\begin{align}\label{eq:ineqab}
\left|\frac{a}{|a|}-\frac{b}{|b|}\right|
= \frac{|(|b|-|a|)a +|a|(a-b)|}{|a| |b|} \leq 2 \frac{|a-b|}{|b|},
\end{align}
applied to \(a=\omega+x_0/2\) and \(b=\omega\).
Moreover,
since $u=u_*$  outside \(B_1\)
 and  \(B_1\subset B_2(x_0)\), we have
\begin{align*}
&\int_{B_2(x_0)}|\nabla u|^2 \, dx-16\pi 
-\left(\int_{B_1}|\nabla u|^2 \, dx - 8\pi\right)
\\
&= \int_{B_2(x_0)\setminus B_1}|\nabla u_*|^2\,dx-8\pi
=\int_{B_2(x_0)}|\nabla u_*|^2\, dx -\int_{B_2}|\nabla u_*|^2\, dx\,.
\end{align*}
Using the fact that 
the function 
$x_0\mapsto \int_{B_2(x_0)}|\nabla u_*|^2\,dx$ is radial and decreasing 
because
$|\nabla u_*|^2$ is radial and decreasing,
we thus get
\[
\int_{B_2(x_0)}|\nabla u|^2 \, dx-16\pi 
\leq
 \int_{B_1}|\nabla u|^2\, dx - 8\pi
\,.
\]
Plugging this inequality and \eqref{eq550} into~\eqref{eq:stab_sing_x0_interm}
 gives~\eqref{eq:stab_sing_x0}.
\end{proof}

\begin{lem}[{\cite{BCL86}}]
\label{l:lower_bd_sing_x0}
For any $x_0\in B_1$ and  $u\in H^1_{u_*}( B_1;\S^2)\cap C^0(\overline B_1\setminus \lbrace x_0\rbrace)$,  we have
\begin{align}
\label{eq:lower_bd_sing_x0}
\int_{B_1}|\nabla u|^2 \, dx - 8\pi \geq \frac{8\pi}{3} |x_0|^2.
\end{align}
\end{lem}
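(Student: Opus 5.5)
The plan is to follow the calibration argument of \cite{BCL86}: bound $|\nabla u|^2$ pointwise from below by the pull-back of the area form, and integrate that pull-back against a $1$-Lipschitz test function chosen to see where the singularity sits. Extend nothing; work on $B_1$. For $u\in H^1(B_1;\S^2)$ define the vector field
\begin{align*}
\mathcal D(u)=\big(u\cdot \partial_2 u\times\partial_3 u,\; u\cdot \partial_3 u\times\partial_1 u,\; u\cdot\partial_1 u\times\partial_2 u\big)\in L^1(B_1;\R^3).
\end{align*}
Since the three partial derivatives of $u$ are tangent to $\S^2$ at $u$, each component is a $2\times 2$ Jacobian. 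The elementary inequality $|\nabla_\omega v|^2\geq 2|\det \nabla_\omega v|$ used in the proof of Lemma~\ref{l:stab_sing_origin}, applied in every coordinate plane, gives the pointwise bound $|\nabla u|^2\geq 2|\mathcal D(u)|$ a.e.

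The first step is the distributional identity
\begin{align*}
\dv \mathcal D(u)=4\pi\,\delta_{x_0}\quad\text{in }\mathcal D'(B_1).
\end{align*}
Away from $x_0$, $u$ is continuous and in $H^1$, so $\dv\mathcal D(u)=0$ there. This holds because $\mathcal D(u)$ is the pull-back of the closed area $2$-form of $\S^2$, and one can justify it by approximating $u$ locally by smooth $\S^2$-valued maps, which is possible for continuous $H^1$ maps. Near $x_0$, the flux of $\mathcal D(u)$ through $\partial B_r(x_0)$ equals $4\pi\deg(u|_{\partial B_r(x_0)})$ for a.e.\ small $r$. The degree is $1$: it is constant in $r$ by continuity, and it equals the degree on $\partial B_1$ (where $u=u_*$) by homotopy through the annular region, using continuity on $\overline B_1\setminus\{x_0\}$. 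On $\partial B_1$ we have $\mathcal D(u)\cdot\nu=\mathcal D(u_*)\cdot \nu=1$, because $u_*|_{\partial B_1}$ is the identity of $\S^2$.

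Next, take any $1$-Lipschitz function $\xi$ on $\overline B_1$ and integrate by parts:
\begin{align*}
\int_{B_1}|\nabla u|^2\,dx\geq 2\int_{B_1}|\mathcal D(u)|\,dx\geq 2\int_{B_1}\mathcal D(u)\cdot\nabla\xi\,dx
=2\Big(\int_{\partial B_1}\xi\,d\mathcal H^2-4\pi\,\xi(x_0)\Big).
\end{align*}
Choose $\xi(x)=|x-x_0|$, so that $\xi(x_0)=0$. A direct computation shows that for $|a|\leq 1$ the spherical average of $|\omega-a|$ is $1+|a|^2/3$; this follows from Newton's shell theorem, or by integrating in the polar angle: $\frac12\int_{-1}^1\sqrt{1-2|a|t+|a|^2}\,dt=1+|a|^2/3$. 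Hence $\int_{\partial B_1}|\omega-x_0|\,d\mathcal H^2=4\pi(1+|x_0|^2/3)$, and we obtain $\int_{B_1}|\nabla u|^2\geq 8\pi+\frac{8\pi}{3}|x_0|^2$, which is \eqref{eq:lower_bd_sing_x0}.

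The main obstacle is the rigorous justification of the distributional identity for $\dv\mathcal D(u)$ for merely $H^1$ maps, in particular making the integration by parts against the Lipschitz function $\xi$ valid near the singular point. To handle this, I would first excise $B_\e(x_0)$ and integrate by parts on $B_1\setminus B_\e(x_0)$, for $u$ continuous there. I would justify the vanishing of $\dv\mathcal D(u)$ on this set by mollifying and reprojecting onto $\S^2$, which is legitimate where $u$ is uniformly continuous. The inner boundary term is then $-\int_{\partial B_\e(x_0)}\xi\,\mathcal D(u)\cdot\nu$, bounded by $\e\int_{\partial B_\e(x_0)}|\nabla u|^2$. Along a sequence $\e_k\to0$ this term tends to $0$, because $\int_0^1\int_{\partial B_\e(x_0)}|\nabla u|^2\,d\e<\infty$ forces $\liminf_{\e\to0}\e\int_{\partial B_\e(x_0)}|\nabla u|^2=0$. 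Since $\xi(x_0)=0$ for our choice, the degree near $x_0$ never needs to be identified explicitly.
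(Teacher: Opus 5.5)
Your proposal is correct and is essentially the paper's proof. It uses the same ingredients: the bound $|\nabla u|^2\geq 2|\mathcal D(u)|$, integration of $\mathcal D(u)$ against the $1$-Lipschitz function $|x-x_0|$ (so the contribution at $x_0$ drops out), unit normal flux on $\partial B_1$, and the same spherical average $8\pi(1+|x_0|^2/3)$. Your excision argument, using $\liminf_{\e\to0}\e\int_{\partial B_\e(x_0)}|\nabla u|^2=0$, is a clean way to make the integration by parts rigorous without invoking \cite[Appendix~B]{BCL86}.

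One justification needs fixing. Applying $2|\det|\leq|\cdot|^2$ in the three coordinate planes and summing only gives $|\nabla u|^2\geq|\mathcal D(u)|$, which is too weak. To get the factor $2$, apply the inequality in the plane orthogonal to $\mathcal D(u)(x)$, i.e.\ rotate coordinates so that $\mathcal D(u)(x)\parallel e_3$; this is legitimate because both $|\nabla u|^2$ and $|\mathcal D(u)|$ are rotation invariant.
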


\begin{proof}[Proof of Lemma~\ref{l:lower_bd_sing_x0}]
This is a special case of estimates proved in \cite[\textsection{7}]{BCL86}.
We recall the proof for the reader's convenience.
We have
\begin{equation}\label{eq598}
\int_{B_1}|\nabla u|^2\, dx
\geq 2\int_{B_1}|D|\, dx,
\end{equation}
where
the vector field $D$ corresponds to the 2-form obtained by pulling back the volume form on $\mathbb S^2$ under the map $u$, and is explicitly given by
\begin{align*}
D=(u\cdot u_y\wedge u_z, u\cdot u_z\wedge u_x , u\cdot u_x\wedge u_y).
\end{align*}
If \(x,y\) are orthonormal local coordinates on \(\mathbb S^2\), then \(D\cdot n = u\cdot (u_x\wedge u_y)\) on \(\mathbb S^2\) (here, \(n\) is the  outer  unit normal to \(\mathbb S^2\)). Since \(n=u_*=u\) on \(\mathbb S^2\), it follows that  the normal component of $D$ is equal to $1$. 
Moreover, since \(u\)  has only one singularity of degree \(1\) at \(x_0\), one has 
$\dv D=4\pi\delta_{x_0}$  in $B_1$, see 
\cite[Appendix~B]{BCL86}.

Let us introduce the \(1\)-Lipschitz function $\zeta(x)=|x-x_0|$.
Since $|D|\geq |D\cdot\nabla \zeta|$, it follows from \eqref{eq598} that
\[
\int_{B_1}|\nabla u|^2\, dx 
\geq 2\bigg|\int_{B_1} D\cdot\nabla\zeta\, dx\bigg|\,.
\]
Integrating by parts in the right-hand side and using the fact that \(\dv D=4\pi\delta_{x_0}\) while \(\zeta(x_0)=0\),
we deduce
\[
\int_{B_1}|\nabla u|^2\, dx 
\geq 2\bigg|
\int_{\partial B_1}(D\cdot n)\zeta\,d\mathcal{H}^2 
\bigg|
= 2 \int_{\mathbb S^2} |\omega -x_0| \, d\mathcal H^2(\omega)\,.
\]
The right-hand side only depends on the norm of \(x_0\). Let us write
\begin{align*}
g:t\in (-1,1) \mapsto 2\int_{\mathbb S^2}|\omega -t e|\,d \mathcal{H}^{2}(\omega)
\end{align*}
for some fixed vector \(e\in \mathbb S^2\). Then, using spherical coordinates, one gets
\begin{align*}
g(t)=4\pi \int_{0}^{\pi}\sqrt{1+t^2-2t\cos\theta}\sin \theta \,d\theta=8\pi \left(1+\frac{t^2}{3}\right)\,,
\end{align*}
and \eqref{eq:lower_bd_sing_x0} follows.
\end{proof}

We now have all the ingredients to prove Theorem~\ref{t:stabx0}.

\begin{proof}[Proof of Theorem~\ref{t:stabx0}]
Plugging \eqref{eq:lower_bd_sing_x0} into \eqref{eq:stab_sing_x0} gives
\begin{align*}
\frac 19 \int_{B_1}|\nabla u-\nabla u_*(\cdot - x_0)|^2
\, dx
\leq \Big(1+\frac 43\Big)\bigg(\int_{B_1}|\nabla u|^2 -8\pi \bigg)\,,
\end{align*}
thus proving \eqref{eq:stabx0} with $c_*=1/21$.
For $x_0=0$ one can take $c_*=1/9$ thanks to 
 Lemma~\ref{l:stab_sing_origin}.
\end{proof}

\subsection{Necessity of the translation}\label{ss:translation}

In this section, we prove that the quadratic stability estimate~\eqref{eq:nondeg_stab} fails to be true in dimension 3. 
Our counterexample shows that taking translations in 
 Conjecture~\ref{conj:stab} seems unavoidable.

\begin{lem}\label{l:ex}
Fix an even  cut-off function 
$\eta\in C_c^\infty(B_1)$
such that
\begin{align*}
\mathbf 1_{|x|\leq 1/2}\leq \eta(x)\leq \mathbf 1_{|x|\leq 3/4}\,,
\end{align*}
and define, for  $|x_0|< 1/4$,
\begin{align*}
u^{x_0}(x)=u_*(x-\eta(x)x_0).  
\end{align*}
There exists a constant $C>0$ such that, for all $x_0\in B_{1/4}$,
\begin{align}\label{eq:ex}
C\int_{B_1}|\nabla u^{x_0}-\nabla u_*|^2\, dx 
&\geq |x_0| 
\quad
\text{and}
\quad
\int_{B_1}|\nabla u^{x_0}|^2\,dx -8\pi 
\leq C |x_0|^2.
\end{align}
\end{lem}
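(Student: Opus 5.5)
The plan is to prove the two inequalities in \eqref{eq:ex} separately, using the structure of $u^{x_0}$. On $B_{1/2}$ it is exactly the translate $u_*(\cdot-x_0)$, and on $B_1\setminus B_{3/4}$ it equals $u_*$. In the annulus $A=B_{3/4}\setminus B_{1/2}$ it equals $u_*\circ\Phi$, where $\Phi(x)=x-\eta(x)x_0$. Since $|\Phi(x)|\geq 1/2-1/4=1/4$ on $A$, this map stays away from the singularity, and $\Phi$ is a smooth perturbation of the identity with $\|\Phi-\mathrm{id}\|_{C^1}\lesssim|x_0|$.

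\textbf{Lower bound.} It suffices to restrict to $B_{1/2}$ and compare $\nabla u_*(\cdot-x_0)$ with $\nabla u_*$. Recall $|\nabla u_*(x)|^2=2/|x|^2$. Near the origin the two gradients have disjoint singularities. On the ball $B_{|x_0|/2}(x_0)$ we have $|\nabla u_*(x-x_0)|\geq \sqrt2\,/|x-x_0|$, while $|\nabla u_*(x)|\leq 2\sqrt2/|x_0|$, so on the smaller ball $B_{|x_0|/4}(x_0)$ the difference is at least $c/|x-x_0|$. Integrating gives
\begin{align*}
\int_{B_{|x_0|/4}(x_0)}\frac{c^2}{|x-x_0|^2}\,dx\gtrsim |x_0|,
\end{align*}
and this ball lies in $B_{1/2}$ because $|x_0|<1/4$. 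This proves the first inequality.

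\textbf{Upper bound.} This is the more delicate step, since we need order $|x_0|^2$ and not merely $|x_0|$. The natural approach is to regard $F(x_0)=\int_{B_1}|\nabla u^{x_0}|^2\,dx$ as a function of $x_0$ and show that it is $C^2$ (or $C^{1,1}$) on $B_{1/4}$ with $F(0)=8\pi$ and $\nabla F(0)=0$. I would decompose
\begin{align*}
F(x_0)=\int_{B_{1/2}}|\nabla u_*(x-x_0)|^2\,dx+\int_A|\nabla(u_*\circ\Phi)|^2\,dx+\int_{B_1\setminus B_{3/4}}|\nabla u_*|^2\,dx .
\end{align*}
The first term equals $\int_{B_{1/2}(-x_0)}2|y|^{-2}\,dy$. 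The integrand is smooth near $\partial B_{1/2}(-x_0)$, so this is a smooth function of $x_0$, and it is even in $x_0$ by symmetry of the ball. The second term is smooth in $x_0$ because the integrand depends smoothly on $x_0$ and stays away from the singularity. It is also even: $\eta$ is even and $u_*$ is odd, so replacing $x_0$ by $-x_0$ and changing variables $x\mapsto -x$ leaves the integrand's value unchanged. Hence $F$ is a smooth even function, which gives $\nabla F(0)=0$. Taylor's formula with a uniform bound on $D^2F$ over $B_{1/4}$ then yields $F(x_0)-8\pi\leq C|x_0|^2$.

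The main obstacle is checking carefully that the first term is smooth up to the order needed, since its integrand has a singularity. Writing it as $2\big(4\pi\cdot\tfrac12\big)-\int_{B_{1/2}\triangle\ldots}$ and differentiating via moving-domain (Reynolds) formulas avoids this issue: only boundary integrals of the smooth function $|y|^{-2}$ near $|y|=1/2$ appear. Alternatively, evenness can be replaced by a direct computation showing that the first variation vanishes, using that $u_*$ is critical for $D$ under domain variations.
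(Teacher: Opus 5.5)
Your proof is correct. For the upper bound you follow the paper's strategy: $h(x_0)=\int_{B_1}|\nabla u^{x_0}|^2\,dx$ is smooth and even on $B_{1/4}$ with $h(0)=8\pi$, so Taylor's formula gives $h(x_0)-8\pi\leq C|x_0|^2$.

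The decompositions differ. The paper splits with a smooth cutoff $\chi\in C_c^\infty(B_{1/2})$, $\chi\equiv 1$ on $B_{1/3}$. Then the singular piece $\int\chi(x)|\nabla u_*(x-x_0)|^2\,dx$ is a convolution of a test function with $|\nabla u_*|^2\in L^1_{\loc}$, and is smooth for free. Your sharp split along $\partial B_{1/2}$ requires the extra moving-domain step. You handle it correctly, since $\partial B_{1/2}(-x_0)$ stays at distance $>1/4$ from the singularity. You also check evenness via $u^{-x_0}(-x)=-u^{x_0}(x)$, which the paper leaves implicit.

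For the lower bound your route genuinely differs. Your pointwise estimate $|\nabla u_*(x-x_0)-\nabla u_*(x)|\geq 1/(\sqrt2\,|x-x_0|)$ on $B_{|x_0|/4}(x_0)$ gives the explicit bound $\frac{\pi}{2}|x_0|$ with no rigidity input. The paper instead rescales by homogeneity to the exact identity \eqref{eq702}. It then uses that $\int_{B_1}|\nabla u_*(y)-\nabla u_*(y-\omega_0)|^2\,dy>0$, because $\nabla u_*(y)=\nabla u_*(z)$ forces $y=\pm z$.

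The paper's identity is more informative because it also sees the intermediate region $|x_0|\ll |x|\ll 1$, not just the core near the translated singularity. It is reused in Remark~\ref{r:ux0higherdim}, where for $n=4$ the factor $\ln(1/|x_0|)$ comes precisely from that region. Your local estimate would only give $|x_0|^{n-2}=|x_0|^2$ there, which does not beat the energy deficit.
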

\begin{proof}
 Let \(x_0\in B_{1/4}\setminus \{0\}\).
 Using that $\eta=1$ on $B_{1/2}$ and $\nabla u_*$ is homogeneous of degree $-1$,
 and letting $\omega_0=x_0/|x_0|\in\mathbb S^2$,
 we find
\begin{align}
\int_{B_1}|\nabla u_*-\nabla u^{x_0}|^2\,dx &\geq \int_{B_{1/2}}|\nabla u_*(x) -\nabla u_*(x-x_0)|^2\,dx\,
\nonumber
\\
&=\frac{1}{|x_0|^2}\int_{B_{1/2}}\left|\nabla u_*\left(\frac{x}{|x_0|}\right) -\nabla u_*\left(\frac{x-x_0}{|x_0|}\right)\right|^2\,dx
\nonumber
\\
&
=|x_0|
\int_{B_{1/(2|x_0|)}}
|\nabla u_*(y)-\nabla u_*(y-\omega_0)|^2\, dy
\label{eq702}
\\
&
\geq 
|x_0|
\int_{B_{1}}
|\nabla u_*(y)-\nabla u_*(y-\omega_0)|^2\, dy\,.
\nonumber
\end{align}
The last integral is independent of $x_0$ by rotational invariance,
and positive since
 \(\nabla u_*(y)=\nabla u_*(z)\) if and only if \(y=\pm z\).
This completes the proof of the first inequality in~\eqref{eq:ex}.
 
The second inequality follows from the fact that
 the even function
\[
h:x_0\in B_{1/4}\mapsto \int_{B_1}|\nabla u^{x_0}|^2\,dx
\]
is smooth.
This can be checked by fixing 
 \(\chi\in C^{\infty}_c(B_{1/2})\) such that \(\chi\equiv 1\) on \(B_{1/3}\), so that we have
\begin{align*}
h(x_0)
=\int_{\R^3}\chi(x)|\nabla u_*(x-x_0)|^2\,dx 
+ \int_{B_1\setminus B_{1/3}}(1-\chi(x))|\nabla u^{x_0}|^2\,dx\,.
\end{align*} 
The first integral 
depends smoothly on $x_0$ as the convolution of $\chi\in C_c^\infty(B_{1/2})$ with $|\nabla u_*|^2\in L^1_{\loc}(\R^3)$.
The second integral depends smoothly on $x_0\in B_{1/4}$ 
because $u_*$ is smooth on $\R^3\setminus 0$ and 
$|x-\eta(x)x_0|\geq 1/12$ for all $x\in B_{1}\setminus B_{1/3}$ and $x_0\in B_{1/4}$.
\end{proof}

\begin{rem}\label{r:ux0higherdim}
In dimension $n\geq 4$,
 consider $u^{x_0}$ defined as in Lemma~\ref{l:ex} in $B_1\subset\R^n$.
For every $x_0\in B_{1/4}\setminus\lbrace 0\rbrace$, 
letting $\omega_0=x_0/|x_0|$
 we find, as in \eqref{eq702},
\begin{equation}\label{eq869}
\int_{B_{\frac 12}}|\nabla u_* -\nabla u^{x_0}|^2\,dx
=|x_0|^{n-2}\int_{B_{\frac{1}{2|x_0|}}}
|\nabla u_*(y)-\nabla u_*(y-\omega_0)|^2
\,dy\,.
\end{equation}
We denote by $C=C(n)>0$ a generic dimensional constant 
whose precise value may change from line to line in what follows.
Direct calculation shows that
\begin{align*}
\frac{1}{C}\leq |\nabla^2 u_*(z)\cdot \omega_0|\leq C\,,\qquad\forall z\in\S^{n-1}\,.
\end{align*}
The homogeneity of $\nabla u_*$ and a contradiction argument therefore imply
\begin{align*}
\frac 1C \leq |y|^2 |\nabla u_*(y)-\nabla u_*(y-\omega_0)| \leq C\,,
\qquad\forall y\in \R^3\setminus B_2\,.
\end{align*}
Using also that 
\begin{align*}
\int_{B_2}|\nabla u_*(y)-\nabla u_*(y -\omega_0)|^2\, dy\leq 
4\int_{B_3}|\nabla u_*|^2\, dy\leq C\,,
\end{align*} 
we deduce
\begin{align*}
\frac 1C \int_{2}^{\frac{1}{2|x_0|}}
\!\!
r^{n-5}\, dr
&
\leq 
\int_{B_{\frac{1}{2|x_0|}}\setminus B_{2}}
\!\!\!
|\nabla u_*(y)-\nabla u_*(y-\omega_0)|^2
\,dy
\\
&
\leq 
\int_{B_{\frac{1}{2|x_0|}}}
|\nabla u_*(y)-\nabla u_*(y-\omega_0)|^2
\,dy
\\
&
\leq C +  C\int_{2}^{\frac{1}{2|x_0|}}
\!\!
r^{n-5}\, dr
\,.
\end{align*}
Combining these inequalities with \eqref{eq869}
and the fact that the function 
\begin{align*}
x_0\mapsto \int_{B_{1}\setminus B_{1/2}}|\nabla u_* -\nabla u^{x_0}|^2\,dx
\end{align*}
is smooth  on $B_{1/4}$ and minimal at $x_0=0$,
we deduce the existence of $\Theta_n(x_0)\in [C^{-1},C]$ such that
\begin{align*}
\int_{B_1}|\nabla u_* -\nabla u^{x_0}|^2\,dx
= 
\Theta_n(x_0)\cdot
\begin{cases}
|x_0|^2\ln\frac 1{|x_0|} &\quad\text{if }n=4\,,
\\
|x_0|^2  &\quad\text{ if }n\geq 5\,,
\end{cases}
\end{align*}
for all $x_0\in B_{1/4}$.
For the energy deficit, the same argument as in Lemma~\ref{l:ex} provides the upper bound
 $D(u^{x_0})-D(u_*)\leq C |x_0|^2$.
For $n=4$, this shows that $D(u^{x_0}-u_*)\gg D(u^{x_0})-D(u_*)$ as $|x_0|\to 0$.

\end{rem}

\section{Minimality for the Oseen-Frank energy}\label{s:minOF}

In this section,  we prove Theorem~\ref{t:minOF} and Theorem~\ref{t:minOF0}; that is,  the minimality of $u_*$ for some values of $\mathbf k$ beyond the known regime $k_1\leq k_2$.

\subsection{Link with $\curl$ stability estimates}

The proofs of Theorem~\ref{t:minOF} and Theorem~\ref{t:minOF0} crucially rely on 
a quantitative control of $\curl u$ in terms of the Dirichlet energy deficit \(D(u)-D(u_*)\):

\begin{lem}\label{l:stabcurlOF}
If $u\in H^1_{u_*}(B_1;\mathbb S^2)$ satisfies
\begin{align}\label{eq:assume_stab_c}
c \int_{B_1}|\curl u|^2\, dx  \leq \int_{B_1}|\nabla u|^2 -  \int_{B_1}|\nabla u_*|^2\,,
\end{align}
for some $c\in (0,1)$,
then, for  any $\mathbf k =(k_1,k_2,k_3)$
such that
\begin{equation}\label{eq837}
0<k_1,k_2,k_3,
\quad k_1 \leq k_2+ \frac{c}{1-c} \min(k_2,k_3),
\end{equation}
we have
\begin{align*}
E_{\mathbf k}(u) \geq E_{\mathbf k}(u_*)\,.
\end{align*}
If equality occurs, then 
equality holds in~\eqref{eq:assume_stab_c},  
and either  $u=u_*$,
 or $k_1=k_2+  ck_3/(1-c)$.
\end{lem}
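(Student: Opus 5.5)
The plan is to expand both energies with the null Lagrangian identity \eqref{eq171} and the pointwise splitting of $|\curl u|^2$. Then I show that $E_{\mathbf k}(u)-E_{\mathbf k}(u_*)$ is a nonnegative combination of the deficit $\lambda:=D(u)-D(u_*)$ and the twist and bend integrals, using \eqref{eq:assume_stab_c}. Because this difference is linear in $\mathbf k$, it is natural to handle the cases $k_3\ge k_2$ and $k_3<k_2$ separately.

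\textbf{Step 1: an exact formula.} For $|u|=1$ we have pointwise $|\curl u|^2=(u\cdot\curl u)^2+|u\times \curl u|^2$. Set
\begin{align*}
T=\int_{B_1}(u\cdot\curl u)^2\,dx,\qquad B=\int_{B_1}|u\times\curl u|^2\,dx .
\end{align*}
By \eqref{eq171},
\begin{align*}
\int_{B_1}(\dv u)^2\,dx=D(u)+8\pi-T-B .
\end{align*}
For $u_*$ we have $\curl u_*=0$ and $\int_{B_1}(\dv u_*)^2\,dx=16\pi$, so $E_{\mathbf k}(u_*)=16\pi k_1$. Since $D(u_*)=8\pi$, this gives the identity
\begin{align*}
E_{\mathbf k}(u)-E_{\mathbf k}(u_*)=k_1\lambda-(k_1-k_2)T-(k_1-k_3)B ,
\end{align*}
which is linear in $\mathbf k$. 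Two facts are available: $\lambda\ge c(T+B)$ by \eqref{eq:assume_stab_c}, and $\lambda\ge 0$.

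\textbf{Step 2: the case $k_3\ge k_2$.} Here the hypothesis \eqref{eq837} reads $k_1\le k_2+\frac{c}{1-c}k_2$, i.e. $(1-c)k_1\le k_2$. Since $k_2\le k_3$, also $(1-c)k_1\le k_3$. Split $k_1\lambda$ and bound it below by $k_1c(T+B)$. The difference is then at least
\begin{align*}
(k_2-(1-c)k_1)\,T+(k_3-(1-c)k_1)\,B\ \ge\ 0 .
\end{align*}

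\textbf{Step 3: the case $k_3<k_2$.} The direct argument of Step 2 fails because the coefficient of $B$ may be negative. Instead I use linearity and write
\begin{align*}
\mathbf k=(k_1,k_3,k_3)+(k_2-k_3)(0,1,0).
\end{align*}
For $\mathbf k'=(k_1,k_3,k_3)$, hypothesis \eqref{eq837} gives $(1-c)k_1\le k_3$, so Step 2 applies and the difference is $\ge 0$. For the triple $(0,1,0)$ the difference is $T-0\ge 0$, since $u_*$ has no twist. Adding the two, with the nonnegative weight $k_2-k_3$, gives $E_{\mathbf k}(u)\ge E_{\mathbf k}(u_*)$.

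\textbf{Step 4: the equality case.} This is the delicate part. If equality holds, every inequality used above is an equality. In each case the computation passes through the estimate $k_1\lambda\ge k_1c(T+B)$, so with $k_1>0$ we get $\lambda=c(T+B)$, i.e. equality in \eqref{eq:assume_stab_c}.

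Suppose now that $u\neq u_*$. Then $\lambda>0$ by uniqueness of the Dirichlet minimizer \cite{BCL86}, hence $T+B>0$, and every remaining coefficient multiplying a nonzero quantity must vanish.
\begin{itemize}
\item In Step 3, either $T>0$, and then the weight $k_2-k_3>0$ already forces $T=0$, a contradiction; or $B>0$ and $T=0$, and then $k_3=(1-c)k_1$, i.e. $k_1=k_2+ck_3/(1-c)$ when one keeps track of the $k_2$ contribution.
\item In Step 2, the coefficient of $B$ or of $T$ must vanish. The coefficient of $T$ vanishing gives $(1-c)k_1=k_2$, which with $k_2\le k_3$ forces the coefficient of $B$ to vanish unless $B=0$.
\end{itemize}
The main obstacle is this bookkeeping. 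One has to check carefully that the only possibility left with $u\ne u_*$ is the boundary line $k_1=k_2+ck_3/(1-c)$. In particular, when $k_2<k_3$ and $k_1=k_2/(1-c)$, I expect to need $T=0$ and $B>0$ to produce a contradiction, using that the coefficient of $B$ is then strictly positive.
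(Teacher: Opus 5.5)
There is a genuine gap, in Step 3 and again in the equality case.

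Your Steps 1 and 2 are correct. The identity $E_{\mathbf k}(u)-E_{\mathbf k}(u_*)=k_1\lambda-(k_1-k_2)T-(k_1-k_3)B$ holds, and for $k_3\ge k_2$ assumption \eqref{eq:assume_stab_c} alone gives the inequality.

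\textbf{Step 3 fails.} When $k_3<k_2$, condition \eqref{eq837} reads $(1-c)k_1\le (1-c)k_2+ck_3$. Because $k_2>k_3$, this does \emph{not} imply $(1-c)k_1\le k_3$. For example, with $c=\frac12$ the triple $\mathbf k=(10,10,1)$ satisfies \eqref{eq837}, whereas $(k_1,k_3,k_3)=(10,1,1)$ does not. No other decomposition can save the step, because the only information you use, $\lambda\ge c(T+B)$ and $\lambda\ge 0$, is too weak. The values $T=0$, $\lambda=cB$, $B>0$ satisfy both, and your identity then gives $(k_3-(1-c)k_1)B=-4B<0$ for this triple. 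The missing ingredient is the estimate
\begin{align*}
\int_{B_1}\big((\dv u)^2+(u\cdot\curl u)^2\big)\,dx\ \ge\ 16\pi
\end{align*}
of \cite[Proposition~4]{ou92}, i.e.\ $E_{(1,1,0)}(u)\ge E_{(1,1,0)}(u_*)$, which in your notation reads $\lambda\ge B$. This is the nontrivial input behind the classical regime $k_2\ge k_1$, and it does not follow from \eqref{eq:assume_stab_c} since $c<1$. With it the argument closes:
\begin{itemize}
\item If $k_1\le k_2$, then $k_1\lambda\ge k_1B\ge (k_1-k_3)B+(k_1-k_2)T$.
\item If $k_1>k_2$, use the convex combination $\lambda\ge tc(T+B)+(1-t)B$ with $t=(k_1-k_2)/(ck_1)\in(0,1]$ (here \eqref{eq837} gives $t\le 1$). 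This yields
\begin{align*}
E_{\mathbf k}(u)-E_{\mathbf k}(u_*)\ \ge\ \Big(k_3-\frac{1-c}{c}(k_1-k_2)\Big)B\ \ge\ 0 .
\end{align*}
\end{itemize}
This is the paper's argument.

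\textbf{The equality case has a second, independent gap.} Take your Step 2 with $(1-c)k_1=k_2<k_3$. This is a boundary point of \eqref{eq837} that does not lie on the exceptional line $k_1=k_2+ck_3/(1-c)$. There, equality only forces $B=0$ and $\lambda=cT$, and nothing you use rules out $T>0$. So no bookkeeping of coefficients can give $u=u_*$ there. Your remark that you need $T=0$, $B>0$ is backwards: a strictly positive coefficient of $B$ forces $B=0$. (Likewise, in Step 3, $k_3=(1-c)k_1$ is not the line $k_1=k_2+ck_3/(1-c)$ unless $k_2=k_3$.)

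What is needed is a rigidity statement: if $u\times\curl u=0$ a.e.\ and $u=u_*$ on $\partial B_1$, then $u=u_*$. This is the argument of \cite[Theorem~1]{ou92}. For unit fields $(u\cdot\nabla)u=-u\times\curl u$, so a bend-free $u$ is constant along its integral lines. These lines are therefore straight, and the boundary datum makes them the rays through the origin. Making this rigorous requires some regularity of $u$. The paper obtains it by observing that $u$ is then a minimizer of $E_{\mathbf k}$ and invoking the partial regularity theory of \cite{HKL86,HKL88}.

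Once $\lambda\ge B$ is available, every other equality configuration off the exceptional line forces $\lambda=B=0$, hence $u=u_*$ by the uniqueness result of \cite{BCL86}. The rigidity argument is genuinely needed only in the corner $(1-c)k_1=k_2<k_3$.
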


\begin{proof}
We write
\begin{align*}
E_{\mathbf k}(u)=k_1I_1+k_2I_2+k_3I_3\,,
\end{align*}
where
\begin{align*}
I_1=\int_{B_1}(\dv u)^2\, dx \,,
\quad
I_2=\int_{B_1}(u\cdot \curl u)^2\, dx\,,
\quad
I_3=\int_{B_1}|u\times\curl u|^2\, dx\,.
\end{align*}
By \cite[Theorem 1]{ou92}, if \(k_2\geq k_1\), then \(E_{\mathbf k}(u)\geq E_{\mathbf k}(u_*)\) with equality if and only if \(u=u_*\). In particular, equality holds in~\eqref{eq:assume_stab_c}.

We may therefore assume from now on that $k_1 > k_2$. 
In view of~\eqref{eq171}, 
\begin{align*}
I_1+I_2+I_3 =E_{(1,1,1)}(u)=\int_{B_1}|\nabla u|^2\, dx
+  8\pi\,,
\end{align*}
and thus,  our assumption~\eqref{eq:assume_stab_c} can be rewritten as
\begin{align*}
c(I_2+I_3)\leq I_1+I_2+I_3 -16\pi\,,
\end{align*}
or equivalently
\begin{align}\label{eq:stab_I}
I_1+(1-c)I_2+(1-c)I_3 \geq 16\pi\,.
\end{align}
By~\cite[Proposition~4]{ou92},  we have
\begin{equation}\label{eq:ineqLin_I}
I_1+I_2 =E_{(1,1,0)}(u) \geq E_{(1,1,0)}(u_*)=\int_{B_1}(\dv u_*)^2\,dx =16\pi\,.
\end{equation}
For any $t\in [0,1]$, 
multiplying \eqref{eq:stab_I} by  $t k_1$ and \eqref{eq:ineqLin_I} by  $(1-t)k_1$, 
we find
\begin{align}\label{eq:ItEk}
k_1 I_1 +(1-tc)k_1 I_2 + t(1-c)k_1 I_3 \geq 16\pi\, k_1 = E_{\mathbf k}(u_*)\,.
\end{align}
By \eqref{eq837}, one has
\[
(1-c)k_1\leq (1-c)k_2+ck_2=k_2\,,
\]
and thus \(k_1-k_2\leq ck_1\).
Together with the fact that $k_2< k_1$, this entitles one to choose
\begin{align*}
t=\frac{k_1-k_2}{ck_1}\in (0,1]\,.
\end{align*}
Observe that \((1-tc)k_1=k_2\) and also that
\[
t(1-c)k_1=\frac{k_1-k_2}{\delta}\,,
\]
where we have set \(\delta=c/(1-c)\).
It thus follows from~\eqref{eq:ItEk} that
\begin{align*}
k_1 I_1 +k_2 I_2 + \frac{k_1-k_2}{\delta} I_3 \geq E_{\mathbf k}(u_*)\,,
\end{align*}
and therefore
\begin{align*}
k_1I_1+k_2 I_2+k_3 I_3
=
E_{\mathbf k}(u)
 \geq E_{\mathbf k}(u_*) +\frac{\delta k_3 +k_2 - k_1}{\delta}I_3\,.
\end{align*}
By~\eqref{eq837} again, $k_1\leq k_2+\delta k_3$, so that
 $E_{\mathbf k}(u)\geq E_{\mathbf k}(u_*)$.
 
Assume that equality occurs; that is, \(u\) minimizes \(E_{\mathbf k}\). 
Since $t>0$ we must have equality in \eqref{eq:stab_I};
that is, equality in the assumption \eqref{eq:assume_stab_c}.
We must also have either $k_1=k_2+\delta k_3$ or  $I_3=0$. 
By the regularity properties of the minimizers of \(E_{\mathbf k}\) developed in \cite{HKL86, HKL88}, the singular set of \(u\) is closed and has zero Lebesgue measure. We can thus repeat the proof of \cite[Theorem 1]{ou92} showing that if \(I_3=0\), then \(u=u_*\).
\end{proof}

\subsection{Proof of Theorem~\ref{t:minOF0}}

From Lemma~\ref{l:stabcurlOF} and the stability estimate of Lemma~\ref{l:stab_sing_origin} for maps which are continuous away from the origin, we can readily prove Theorem~\ref{t:minOF0}.

\begin{proof}[Proof of Theorem~\ref{t:minOF0}]
Let $u\in H^1_{u_*}(B_1;\mathbb S^2)$. 
We  introduce the map \(v=u-u_*\).
Since $\curl u_*=0$, one has
\[
\int_{B_1}|\curl u|^2\, dx 
=\int_{B_1}|\curl v|^2\, dx \leq \int_{B_1} \bigg((\dv v)^2+|\curl v|^2 \bigg)\, dx\,.
\]
Using that $v=u-u_*$ vanishes on $\partial B_1$  and that $|\nabla v|^2-|\curl v|^2-(\dv v)^2$ is a null Lagrangian, we thus get
\[
\int_{B_1}|\curl u|^2\, dx 
\leq \int_{B_1}|\nabla v|^2\, dx = \int_{B_1}|\nabla u-\nabla u_*|^2\, dx\,.
\]
Assuming in addition that $u$ is continuous in $ \overline B_1 \setminus\lbrace 0\rbrace$, we can apply Lemma~\ref{l:stab_sing_origin} to deduce that
\begin{align}\label{eq:stabcurl0}
\frac 19\int_{B_1}|\curl u|^2\, dx 
\leq 
\int_{B_1}|\nabla u|^2\, dx - 8\pi\,.
\end{align}
Lemma~\ref{l:stabcurlOF} therefore implies that $E_{\mathbf k}(u)\geq E_{\mathbf k}(u_*)$ if $k_1\leq k_2 +\min(k_2,k_3)/8$.
Moreover, equality in this inequality implies equality in  \eqref{eq:stab_sing_origin}, hence $u=u_*$.
\end{proof}

\subsection{Proof of Theorem~\ref{t:minOF}}

The proof of Theorem~\ref{t:minOF} follows  the same strategy, 
but we first need to invoke an argument in \cite{AL88} showing that for $\mathbf k\approx \mathbf 1$, the 
minimizers of $E_{\mathbf k}$ have a single singularity.
For the readers' convenience, we provide a sketch of the proof.

\begin{lem}[{\cite[\textsection{6.1}]{AL88}}]
\label{l:minOFsing}
There exists $\eta>0$ such that,
 if $k_1,k_2,k_3>0$ satisfy $|k_2/k_1-1|+|k_3/k_1-1|<\eta$, then any minimizer $E_{\mathbf k}$ 
 in $H^1_{u_*}(B_1;\mathbb S^2)$
is smooth away from a single point $x_0\in B_1$.
\end{lem}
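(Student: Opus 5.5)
The plan is to follow the compactness strategy of Almgren--Lieb as the paper indicates. Suppose the statement fails. Then there are sequences $\mathbf k^{(j)}$ with $|k_2^{(j)}/k_1^{(j)}-1|+|k_3^{(j)}/k_1^{(j)}-1|\to 0$ and minimizers $u_j$ of $E_{\mathbf k^{(j)}}$ in $H^1_{u_*}(B_1;\S^2)$ which are not smooth away from a single point. Since $E_{\mathbf k}$ scales in $k_1$, we may normalize $k_1^{(j)}=1$, so that $\mathbf k^{(j)}\to\mathbf 1$.

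The first step is compactness. Because the elastic constants are bounded above and below, the $u_j$ are uniformly bounded in $H^1$ by comparison with $u_*$, since $E_{\mathbf k}(u_j)\le E_{\mathbf k}(u_*)$. The Hardt--Kinderlehrer--Lin theory \cite{HKL86,HKL88} gives partial regularity with uniform constants for $\mathbf k$ near $\mathbf 1$. It also gives strong $H^1$ compactness: a subsequence converges strongly to a minimizer of $E_{\mathbf 1}$, and by the null Lagrangian identity \eqref{eq171} this is a minimizer of $D$. By \cite{BCL86} that limit must be $u_*$.

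The second step localizes the singularities near the origin. The $\varepsilon$-regularity theorem (small scaled energy on a ball implies smoothness with uniform estimates) holds uniformly in $\mathbf k$ near $\mathbf 1$. Strong convergence to $u_*$, which is smooth away from $0$ up to the boundary, then shows that for $j$ large $u_j$ is smooth on $\overline B_1\setminus B_\rho$ for any fixed $\rho$. Boundary regularity is handled the same way, since the boundary data are smooth.

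The third step is the blow-up, which rules out several singularities clustering at $0$. Let $r_j$ be the maximal distance between two singular points of $u_j$, or choose a scale on which the singular set is spread out. Set $v_j(y)=u_j(r_j y)$ and extend by $u_*$. Then $v_j$ minimizes $E_{\mathbf k^{(j)}}$ on larger and larger balls with the same boundary behaviour at infinity. The singular points of $v_j$ stay at mutual distance of order one, at least two of them, in a bounded region. Passing to the limit gives a locally minimizing harmonic map $v$ on $\R^3$ into $\S^2$, and by upper semicontinuity of the singular set under strong convergence it has at least two singularities. Monotonicity gives a bounded density ratio, and the tangent map at infinity is $\pm x/|x|$ up to rotation. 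The degree at infinity equals one, which forces the tangent map at infinity to be $u_*$ up to rotation, and each singularity carries degree $\pm1$ by \cite{BCL86}.

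To conclude, I would argue that a minimizing harmonic map on $\R^3$ whose tangent map at infinity is $x/|x|$ must be a translate of $x/|x|$. This follows from monotonicity: the density equals $4\pi$ both at infinity and at every singular point, which forces homogeneity about each singular point. In particular $v$ has a single singularity, a contradiction.

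The main obstacle is this final rigidity step, together with making the semicontinuity of singular sets uniform in $\mathbf k$. I would try first to quote \cite[\S6.1]{AL88} and \cite{HKL88} for $\varepsilon$-regularity with constants uniform in $\mathbf k$.
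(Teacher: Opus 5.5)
Your proposal is correct and follows essentially the same route as the paper. Both argue by contradiction along $\mathbf k^{(j)}\to\mathbf 1$, obtain strong convergence of the minimizers to $u_*$ (via the uniqueness result of \cite{BCL86} and the compactness theory of \cite{HKL86,HKL88}), use uniform $\varepsilon$-regularity to push all singularities to the origin, and blow up at the inter-singularity scale to get a local minimizer of $D$ on $\R^3$ with two singular points. The only difference is the last step: the paper simply invokes \cite[Theorem~2.2]{AL88}, while you reprove it by monotonicity (equal densities at each singular point and at infinity force homogeneity about each singular point), which is sound. The degree-at-infinity computation is not needed there, since the density at infinity is at least the density at a singular point and, being that of a homogeneous minimizer, can only equal $0$ or the density of $x/|x|$.
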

\begin{proof}
Dividing $E_{\mathbf k}$ by $k_1$,  we assume without loss of generality that $k_1=1$.
Then we argue by contradiction and assume the existence of 
 a
sequence of triples
 $\mathbf k^{(j)}=(1,k_{2}^{(j)},k_3^{(j)})\to \mathbf 1$ 
and maps $u^{(j)}$ 
minimizing 
$E_{\mathbf k^{(j)}}$ in $H^1_{u_*}(B_1;\mathbb S^2)$ such that $u^{(j)}$ has at least two singular points $x^{(j)}\neq y^{(j)}\in B_1$.
We extract a weak limit $u^{(j)}\rightharpoonup \bar u$ in $H^1(B_1;\R^3)$. Then $\bar u\in H^1_{u_*}(B_1;\S^2)$. 
By weak lower semicontinuity of \(E_{\mathbf 1}\) and minimality of each \(u^{(j)}\), this implies that
\[
E_{\mathbf{1}}(\bar u)\leq \liminf E_{\mathbf k^{(j)}}(u^{(j)})
\leq \limsup E_{\mathbf k^{(j)}}(u^{(j)})\leq 
\limsup E_{\mathbf k^{(j)}}(u_*)=E_{\mathbf{1}}(u_*)\,.
\]
Hence, \(\bar u=u_*\) by uniqueness of the minimizer of  \(D=E_{\mathbf{1}}-8\pi\), see \cite[Theorem 7.1]{BCL86}. Moreover, all the above inequalities are equalities. 
This implies that $u^{(j)}\to u_*$ strongly in $H^1$, see e.g. \cite[Lemma 5.2]{HKL88}.
We next rely on the small energy regularity 
for minimizers of \(E_{\mathbf{k}}\) developed in \cite[Section 2]{HKL86}: given a compact subset \(K\subset (0,\infty)^3\), 
there exists \(\varepsilon>0\) such that for every \(a\in B_1\), for every \(R>0\) such that \(B_{2R}(a)\subset B_1\), for every \(\mathbf{k}\in K\), if \(w\) is a  minimizer of \(E_{\mathbf{k}}\) on \(B_1\) such that
\[
\frac{1}{R}\int_{B_{2R}(a)}|\nabla w|^2\leq \varepsilon,
\] 
then \(w\) is smooth in \(B_{R}(a)\). 
We proceed to apply this statement for \(K=\lbrace\mathbf{k}^{(j)}\rbrace_{j\geq 1}\cup \{\mathbf{1}\}\).
Using the fact that $u_*$ is smooth away from \(0\) and the strong convergence of \(u^{(j)}\) to \(u_*\), 
the above  small energy regularity result implies that for every \(\delta\in (0,1)\),  there exists \(j_{\delta}\geq 1\) such that 
$u^{(j)}$ is smooth away from $B_\delta$ for every $j\geq j_\delta$.
As a consequence, the singularities $x^{(j)}\neq y^{(j)}$ must both converge to $0$.
Now consider the rescaled maps
\begin{align*}
\hat u^{(j)}(\hat x)=u^{(j)}(x^{(j)}+r^{(j)}\hat x),\quad r^{(j)}=|x^{(j)}-y^{(j)}|\to 0\,.
\end{align*}
For any $R>0$, we have $x^{(j)}+r^{(j)} B_{R}\subset B_1$ if $j$ is large enough, and $\hat u^{(j)}$ is a
 minimizer of $E_{\mathbf k^{(j)}}$ among maps in $H^1(B_{R};\S^2)$ with the same boundary values.
The compactness result   \cite[Theorem 5.3]{HKL88} \footnote{This result in \cite{HKL88} is stated for a single functional \(E_{\mathbf k}\) but the proof generalizes at once for a family of functionals \(E_{\mathbf k^{(j)}}\) provided that the parameters 
\(\mathbf k^{(j)}\) remain in a compact subset of \((0,\infty)^3\), as this is the case in our situation.}
 and a diagonal argument imply therefore that, 
 after extracting a subsequence, $\hat u^{(j)}\to\hat u$ strongly in $H^1_{loc}(\R^3;\R^3)$, and $\hat u\in H^1_{\loc}(\R^3;\S^2)$ is a local minimizer of the Dirichlet energy $D$.
According to \cite[Theorem~2.2]{AL88}, 
the map $\hat u$ is smooth away from at most one singular point $\hat z$.
By strong convergence and using again the small energy regularity for the minimizers of $E_{\mathbf k}$, this implies that $\hat u^{(j)}$ is smooth in $B_2(0)\setminus B_{\delta}(\hat z)$ for any small $\delta >0$ and large enough $j$,
in contradiction with the fact that $\hat u^{(j)}$ has a singularity at 0 and another singularity on $\partial B_1$.
\end{proof}

\begin{proof}[Proof of Theorem~\ref{t:minOF}]
By Lemma~\ref{l:minOFsing}, there exists $\eta>0$ such that, 
if $|k_2/k_1-1|+|k_3/k_1-1|\leq \eta$, 
then any minimizer $u$ of $E_{\mathbf k}$ in $H^1_{u_*}(B_1;\mathbb S^2)$ has a single singularity $x_0\in B_1$.
Setting $v=u-u_*(\cdot - x_0)$,
and using that $\curl u_*=0$ and $|\curl v|^2\leq 2|\nabla v|^2$,
we find
\[
\int_{B_1}|\curl u|^2\, dx
=\int_{B_1}|\curl v|^2\, dx
\leq 2\int_{B_1}|\nabla v|^2\, dx =2\int_{B_1}|\nabla u-\nabla u_*(\cdot -x_0)|^2\,dx\,.
\]
Combining this with the stability estimate of Theorem~\ref{t:stabx0} we deduce
\begin{align*}
\frac{c_*}{2}\int_{B_1}|\curl u|^2\, dx 
\leq \int_{B_1}|\nabla u|^2\,dx -8\pi\,.
\end{align*}
According to Lemma~\ref{l:stabcurlOF} 
this implies that $u_*$ is the unique minimizer of $E_{\mathbf k}$ if
$k_1 < k_2+\delta\min(k_2,k_3)$, with $\delta =c_*/(2-c_*)$.
Here, the parameters \(k_1, k_2\) and \(k_3\) are also subject to the additional restriction
$|k_2/k_1-1|+|k_3/k_1-1|\leq \eta$.
We deduce in particular that for any \(\eta'>0\) sufficiently small, the map $u_*$ is the unique minimizer of $E_{\mathbf k}$ for 
$\mathbf k=(1+\eta',1,1)$. We can thus repeat the calculation presented in Remark~\ref{r:curl} to infer the validity of \eqref{eq:assume_stab_c} for $c=\eta'/(1+\eta')>0$ and all $u\in H^1_{u_*}(B_1;\S^2)$.
The conclusion thus follows from Lemma~\ref{l:stabcurlOF}.
\end{proof}

\bibliographystyle{acm}
\bibliography{oseenfrank}

\end{document}